\newcommand\Tstrut{\rule{0pt}{2.6ex}}         
\newcommand\Bstrut{\rule[-1.4ex]{0pt}{0pt}}   
\newtheorem{thm}{Theorem}
\newtheorem{cor}{Corollary}[section]
\newtheorem{lem}{Lemma}[section]
\newtheorem{prop}{Proposition}[section]
\theoremstyle{definition}
\newtheorem{defn}{Definition}[section]
\theoremstyle{remark}
\newtheorem{rem}{Remark}[section]
\numberwithin{equation}{section}
\newtheorem{ex}{Example}[section]
\NewDocumentCommand{\cycle}{ O{\;} m }
{
	(
	\alec_cycle:nn { #1 } { #2 }
	)
}
\newcommand{\R}{\mathbb{R}}
\newcommand{\N}{\mathbb{N}}
\newcommand{\id}{\mathrm{id}}
\newcommand{\sgn}{\mathrm{sgn}}
\newcommand{\PMe}{\mathrm{PM}}
\newcommand{\Perm}{\mathrm{Perm}}
\begin{document}

\title{On the finite representation of linear group equivariant operators via permutant measures \thanks{This research has been partially supported by INdAM-GNSAGA.}
}


\author{Giovanni Bocchi \and
        Stefano Botteghi \and
        Martina Brasini \and
        Patrizio Frosini \and
        Nicola Quercioli
}


\institute{Giovanni Bocchi \at Department of Environmental Science and Policy, University of Milan, Italy \\
		\email{giovanni.bocchi1@unimi.it}           
		\and
		Stefano Botteghi \at Department of Mathematics, University of Bologna, Italy \\
		\email{stefano.botteghi@studio.unibo.it}           
		\and
		Martina Brasini \at Department of Mathematics, University of Bologna, Italy \\
		\email{martina.brasini@studio.unibo.it}           
		\and
		Patrizio Frosini (Corresponding author)\at Department of Mathematics, University of Bologna, Italy \\
		\email{patrizio.frosini@unibo.it}           
		\and
		Nicola Quercioli \at ENEA, Centro Ricerche Bologna, Italy \\
		\email{nicola.quercioli@enea.it}           
		\and
	}


\maketitle

\vskip -2cm

\begin{abstract}
Recent advances in machine learning have highlighted the importance of using group equivariant non-expansive operators for building neural networks in a more transparent and interpretable way.
An operator is called equivariant with respect to a group if the action of the group commutes with the operator.
Group equivariant non-expansive operators can be seen as multi-level components that can be joined and connected in order to form neural networks by applying the operations of chaining, convex combination and direct product.
In this paper we prove that each linear $G$-equivariant non-expansive operator (GENEO) can be produced by a weighted summation associated with a suitable permutant measure, provided that the group $G$ transitively acts on a finite signal domain.
This result is based on the Birkhoff–von Neumann decomposition of doubly stochastic matrices and some well known facts in group theory.
Our theorem makes available a new method to build all linear GENEOs with respect to a transitively acting group in the finite setting.
This work is part of the research devoted to develop a good mathematical theory of GENEOs, seen as relevant components in machine learning. \end{abstract}

\keywords{Group equivariant operator \and group equivariant non-expansive operator \and permutant \and permutant measure}

\subclass{Primary: 68T09; Secondary: 15B51, 20C35, 47B38, 55N31, 62R40, 68U05}

\section*{Introduction}
The development of new mathematical approaches and results for deep learning is an important goal in the present time.
In particular, new methods and theories are requested in order to understand and control the behavior of neural networks.
In this line of research, the scientific community is devoting more and more attention to the use of equivariant operators, since they appear to be of great importance for the progress of machine learning~\cite{AnRoPo16,cohen2016group,worrall2017harmonic}.
We recall that an operator is called equivariant with respect to a group if the action of the group commutes with
the operator.
{For example, the operator associating each regular function $f:\R^n\to\R$ to its Laplacian $\Delta f$ commutes with each Euclidean isometry of $\R^n$.
The property of equivariance} is of use when we want to mimic the behavior of observers and agents that are known to respect some symmetries. In fact, the use of equivariant operators allows us to inject pre-existing knowledge into the system, thus increasing our control on the costruction of neural networks~\cite{BeCoVi13}.
Furthermore, invariant and non-expansive operators can be used to reduce data variability~\cite{Ma12,Ma16}, and in the last years equivariant transformations have been studied for learning symmetries~\cite{ZVERP15,AERP19}.

{Due to the relevance of group actions in deep learning, geometry is giving a significant contribution to the study of AI~\cite{BrBrLeSzVa17}.
Geometric Deep Learning is indeed trying to produce a geometric unification of several approaches to machine learning, focusing on the concepts of symmetry and invariance.
At the intersection between this research field and Topological Data Analysis, it has been proposed to extend
the study of the geometry of the space of data to the
study of the geometry of the space of the observers/agents that elaborate the data~\cite{Fr16,FrJa16}. This idea is both natural and relevant, since the interpretation of data
depends on the chosen observers, and
the approximation of the agents requires the knowledge of the topological and geometric properties of the space such agents belong to, including connectivity, convexity, compactness, curvature and so on.

Recently, the study of group equivariant non-expansive operators (GENEOs) has been proposed in~\cite{Bergomi2019} as an interesting topic in machine learning, since
these operators model the concept of data observer. Furthermore, they could be seen as multi-level components that can be joined and connected to form neural networks by applying the operations of chaining, convex combination and direct product, opening the path to a new kind of explainable ``geometric agent engineering''. Therefore, the analysis of the topology and geometry of the spaces of GENEOs could lead to new theoretical results for building neural networks in a more transparent and interpretable way. For example, the use of a suitable topological setting allowed us to prove the compactness of the space of all GENEOs, under the assumption that the space of data is compact ~\cite{Bergomi2019}. This result has relevant practical consequences, since it guarantees the finite approximability of the spaces of GENEOs, provided that the spaces of data are compact. Incidentally, we stress that the previously stated compactness does not hold for expansive equivariant operators, so justifying our interest in GENEOs.

The development of a good topological and geometric theory of the spaces of GENEOs could indeed produce new methods for approximating external agents in such spaces, suggest how to change such operators without losing their equivariance, benefit from their lattice structure with respect to the operations of maximization and minimization~\cite{FrQu17}, and allow to manage relations and conflicts that can arise in intelligent structures~\cite{Frosini2009DoesII}, just to make a few examples.
As for the link between GENEOs and Topological Data Analysis (with particular reference to \emph{persistent homology}), we refer the interested reader to \cite{FrJa16,Bergomi2019}.
A central role in this link is taken by the so-called \emph{natural pseudo-distance}~\cite{DoFr04,DoFr07,DoFr09,FrMu99}.

In summary, the use of GENEOs in machine learning requires to explore the global structure of the spaces these operators belong to, in order to fully benefit from their potential. In particular, this paper is devoted to study the global structure of the space of all linear $G$-equivariant non-expansive operators transitively acting on a finite signal domain $X$, with respect to the concept of \emph{permutant measure}.
}

We already know that every linear $G$-equivariant operator (GEO) can be represented as a $G$-con\-vo\-lu\-tion, provided that the group $G$ is compact and its action is transitive~\cite{pmlr-v80-kondor18a}. Unfortunately, the computation of the integral representing such a convolution is usually not trivial since in many applications the group $G$ is far from being small. In order to solve this problem, a new technique to build GEOs with respect to a group $G$ has been recently proposed, based on the concept of \emph{permutant}~\cite{CaFrQu18,CoFrQu22}. In plain words, a permutant is a collection of automorphisms of the domain $X$ of the signals we are interested in, under the assumption that such a collection is stable for the conjugation action of the group $G$.
A permutant is not required to be a group.
{When a non-empty permutant $H$ for $G$ is available, we can define a GEO $F$ for $G$ by setting
$F(\varphi):=\sum_{h\in H}\varphi h^{-1}$ for every admissible signal $\varphi$. The main benefit of this procedure is that the permutant $H$ can be much smaller than the group $G$, so that the computation of the GEO defined by $H$ can be much simpler than the computation of GEOs represented as $G$-convolutions.

In this paper we extend the definitions of permutant and GEO associated with a permutant by introducing the definitions of \emph{permutant measure} and \emph{GEO associated with a permutant measure}.
If $\mu$ is a permutant measure on the group $\mathrm{Aut}(X)$ of all permutations on $X$, we can define a GEO $F_\mu$ for $G$ by setting $F_\mu(\varphi):=\sum_{h\in \mathrm{Aut}(X)}\varphi h^{-1}\mu(h)$ for every admissible signal $\varphi$.}
In our mathematical setting we can study the relationship between linear GEOs and GEOs associated with permutant measures. In particular, we can prove that these two concepts coincide, provided that the group $G$ transitively acts on a finite signal domain $X$.
This theorem is based on the Birkhoff–von Neumann decomposition of doubly stochastic matrices and some well known facts in group theory. Its statement makes available a new method to build all linear GEOs with respect to a transitively acting group in the finite setting.
As a final step, we get our main result by adapting the previous theorem to the case of GENEOs, by taking into account the non-expansiveness condition.

{We stress that this paper is not focused on direct applications of GENEOs, but on a piece of research functional to the development of a good general theory of these operators. Our long-term goal is the one of making available equivariant operators that are both easily computable and predictable in their behavior, so allowing for their use in geometric deep learning.
}

The outline of the paper is as follows. In Section~\ref{sec:math-setting} we recall the main definitions in our mathematical setting, and introduce the concept of permutant measure together with some of its properties. In Section~\ref{main_result} we prove our main result about GEOs (Theorem~\ref{mainresult}), which is adapted to GENEOs in Section~\ref{main_result2} (Theorem~\ref{mainthm2}).
{In Section~\ref{example} we illustrate a toy example, showing possible advantages of the use of GENEOs built by permutant measures.}
In Section~\ref{discussion} we conclude the paper with a brief discussion.


%

\section{Mathematical setting}\label{sec:math-setting}

Let $\R^X\cong \R^n$ be the vector space of all functions from a finite set $X=\{x_1,\ldots,x_n\}$ to $\R$. We would like to recall that $\mathbb{R}^X$ has the canonical basis $\{{\mathds{1}}_{x_j}\}_{j}$, where ${\mathds{1}}_{x}\colon X \to \mathbb{R}$ is the function taking the value $1$ at $x$ and the value $0$ at every point $y$ with $y\neq x$.
We also consider the group $\mathrm{Aut}(X)$ of all permutations on $X$ and a subgroup $G$ of $\mathrm{Aut}(X)$. $\mathrm{Aut}(X)$ and $G$ naturally act on $\R^X$ by composition on the right. We endow $\R^X$ with the $L^\infty$-norm: $\|\varphi\|_\infty:=\max\{|\varphi(x_i)|:1\le i\le n\}$.

\begin{rem}\label{remPhi}
If we endow $X$ with the discrete topology, $\R^X$ coincides with $C^0(X,\R)$.
\end{rem}

In this paper we will use the multiplicative notation to denote the composition of functions, and the cycle notation to represent permutations.

\subsection{Group equivariant operators}\label{GEO}

We give the following definition.

\begin{defn}
A \emph{Group Equivariant Operator} (GEO) for $(\R^X,G)$ (with respect to the identity $\id_G \colon g \mapsto g$) is a function
$F \colon {\R^X} \longrightarrow {\R^X}$ such that $F(\varphi g)=F(\varphi) g$,
for all $\varphi\in {\R^X}$ and $g \in G$.
\end{defn}

An important subset of the set of GEOs is given by the set of \emph{Group Equivariant Non-Expansive Operators} (GENEOs),
i.e., GEOs $F$ such that $\| F(\varphi_{1})-F(\varphi_{2})\|_{\infty} \le \| \varphi_{1} -\varphi_{2}\|_{\infty}$,
for all $\varphi_{1},\varphi_{2}\in \R^X$.
In a more general framework, GEOs and GENEOs can be defined from $(\R^X,G)$ to $(\R^Y,H)$ with respect to a group homomorphism $T \colon G \to H$, where equivariance means that $F(\varphi g)=F(\varphi) T(g)$,
for all $\varphi\in {\R^X}$ and $g \in G$.
For further information, we refer the reader to~\cite{Bergomi2019}.
Obviously, the set of GEOs from $(\R^X,G)$ to $(\R^X,G)$ is not empty because it contains at least the identity operator $\id_{\R^X} \colon \varphi \mapsto \varphi$.


\subsection{Permutants and permutant measures}\label{permutants}

\begin{defn}
A finite signed measure ${\mu}$ on $\mathrm{Aut}(X)$ is called a \emph{permutant measure} with respect to $G$ if each subset $H$ of $\mathrm{Aut}(X)$ is measurable and $\mu$ is invariant under the conjugation action of $G$ (i.e., ${\mu}(H)={\mu}(g H g^{-1})$ for every $g\in G$). Equivalently, we can say that a signed measure ${\mu}$ on $\mathrm{Aut}(X)$ is a permutant measure with respect to $G$ if each singleton $\{h\}\subseteq \mathrm{Aut}(X)$ is measurable and ${\mu}(\{h\})={\mu}(\{g h g^{-1}\})$ for every $g\in G$.
\end{defn}

With a slight abuse of notation, we will denote by ${\mu}(h)$ the signed measure of the singleton $\{h\}$ for each $h\in \mathrm{Aut}(X)$.

\begin{ex}\label{exS1}
Let us consider a positive integer number $n$ and the finite set $$X:=\left\{\left(\cos \frac{2\pi k}{n},\sin \frac{2\pi k}{n}\right)\in\R^2:k\in \N, 0\le k\le n-1\right\}.$$ Let $G$ be the group of all rotations of $X$ of an angle $\alpha$ around the point $(0,0)$, with $\alpha$ multiple of $\frac{2\pi}{n}$. After fixing an integer number $m$, let us consider the map $\bar h\in \mathrm{Aut}(X)$ that takes each point $\left(\cos \frac{2\pi k}{n},\sin \frac{2\pi k}{n}\right)$ to
the point $\left(\cos \frac{2\pi (k+m)}{n},\sin \frac{2\pi (k+m)}{n}\right)$. Moreover, we define
the function $\mu_1\colon \mathcal{P}(\mathrm{Aut}(X))\to \R$ that takes each subset $C$ of $\mathrm{Aut}(X)$ to $1$ if $\bar h\in C$ and to $0$ if $\bar h\notin C$, where $\mathcal{P}(\mathrm{Aut}(X))$ is the power set of $\mathrm{Aut}(X)$.
Since the orbit of $\bar h$ under the conjugation action of $G$ is the singleton $\{\bar h\}$, the function $\mu_1$ is a permutant measure. We also observe that while the cardinality of $G$ is $n$, the cardinality of the support $\mathrm{supp}(\mu_1):=\{h\in \mathrm{Aut}(X):\mu_1(h)\neq 0\}$ of the signed measure  $\mu_1$ is $1$.
\end{ex}

\begin{ex}\label{exS2}
Let us consider the set $X$ of the vertices of a cube in $\R^3$, and the group $G$ of the orientation-preserving isometries of $\R^3$ that take
$X$ to $X$. Let $\pi_1,\pi_2,\pi_3$ be the three planes that contain the center of mass of $X$ and are parallel to a face of the cube. Let $h_i:X\to X$ be the orthogonal symmetry with respect to $\pi_i$, for $i\in \{1,2,3\}$. We have that the set $\{h_1,h_2,h_3\}$ is an orbit under the conjugation action of $G$. We can now define a permutant measure $\mu_2$ on the group $\mathrm{Aut}(X)$ by setting $\mu_2(h_1)=\mu_2(h_2)=\mu_2(h_3)= c$, where $c$ is a positive real number, and $\mu_2(h)=0$ for any $h\in \mathrm{Aut}(X)$ with $h\notin \{h_1,h_2,h_3\}$.
We also observe that while the cardinality of $G$ is $24$, the cardinality of the support $\mathrm{supp}(\mu_2):=\{h\in \mathrm{Aut}(X):\mu_2(h)\neq 0\}$ of the signed measure $\mu_2$ is $3$.
\end{ex}

Permutant measures give a simple method to build GEOs, as shown by the following result.

\begin{prop}\label{mainprop}
If $\mu$ is a permutant measure with respect to $G$, then the map $F_\mu:\R^X\to\R^X$ defined by setting
$F_\mu(\varphi):=\sum_{h\in {\mathrm{Aut}(X)}}\varphi h^{-1}\ {\mu}(h)$ is a linear GEO.
\end{prop}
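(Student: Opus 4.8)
The plan is to check separately the two defining properties of a linear GEO for $(\R^X,G)$: that $F_\mu$ is $\R$-linear, and that it satisfies the equivariance identity $F_\mu(\varphi g)=F_\mu(\varphi)\,g$ for every $g\in G$.

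For linearity, I would note that $X$ is finite, hence $\mathrm{Aut}(X)$ is finite, so $F_\mu$ is a \emph{finite} linear combination, with the real coefficients $\mu(h)$, of the operators $R_{h^{-1}}\colon\varphi\mapsto\varphi h^{-1}$. Each $R_{h^{-1}}$ is linear, because precomposing with the fixed permutation $h^{-1}$ only reshuffles the values of $\varphi$ and therefore commutes with pointwise addition and scalar multiplication; a finite $\R$-linear combination of linear operators $\R^X\to\R^X$ is again such an operator. Well-definedness of $F_\mu$ as a map into $\R^X$ is automatic, since every summand already lands in $\R^X$.

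For equivariance, fix $g\in G$ and $\varphi\in\R^X$. Recalling that the action is composition on the right and that each $\mu(h)$ is a scalar, I would start from
\[
F_\mu(\varphi g)=\sum_{h\in\mathrm{Aut}(X)}\varphi\, g\, h^{-1}\,\mu(h).
\]
The decisive step is a change of summation variable: substitute $h=g^{-1}kg$ (equivalently $k=ghg^{-1}$), which is a bijection of $\mathrm{Aut}(X)$ onto itself. Then $g\,h^{-1}=g\,(g^{-1}kg)^{-1}=k^{-1}g$ and, crucially, $\mu(h)=\mu(g^{-1}kg)=\mu(k)$ by the conjugation invariance of the permutant measure $\mu$ (applied to the element $g^{-1}\in G$). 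Hence
\[
F_\mu(\varphi g)=\sum_{k\in\mathrm{Aut}(X)}\varphi\, k^{-1}\, g\,\mu(k)=\Bigl(\sum_{k\in\mathrm{Aut}(X)}\varphi\, k^{-1}\,\mu(k)\Bigr)g=F_\mu(\varphi)\,g,
\]
where the second equality again uses that the $\mu(k)$ are scalars and that right composition with $g$ is linear. This is exactly the required equivariance.

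I do not expect any genuine obstacle here: the argument is a short, direct computation. The only thing that needs care is bookkeeping — keeping the order of compositions and the placement of inverses consistent with the right-action convention, and making sure the reindexing is a conjugation by an element of $G$ (here $g^{-1}$), so that the defining invariance $\mu(\{h\})=\mu(\{g^{-1}hg\})$ of a permutant measure genuinely applies.
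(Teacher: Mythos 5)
Your proposal is correct and follows essentially the same route as the paper: linearity from the (finite) linear combination of the right-composition operators, and equivariance via the reindexing $h\mapsto ghg^{-1}$ together with the conjugation invariance of $\mu$. The paper's computation inserts $g^{-1}g$ to exhibit $gh^{-1}g^{-1}=(ghg^{-1})^{-1}$ explicitly, but this is the same substitution you perform.
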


\begin{proof}
Since $\mathrm{Aut}(X)$ linearly acts on $\R^X$ by composition on the right, $F_\mu$ is linear. Moreover, for every $\varphi\in\R^X$ and every $g\in G$
\begin{align}\label{f1}
F_\mu(\varphi g)&=\sum_{h\in {\mathrm{Aut}(X)}}\varphi g h^{-1}\ {\mu}(h)\\
&=\sum_{h\in {\mathrm{Aut}(X)}}\varphi g h^{-1}g^{-1}g\ {\mu}(g h g^{-1})\nonumber\\
&=\sum_{f\in {\mathrm{Aut}(X)}}\varphi f^{-1} g\ {\mu}(f)\nonumber\\
&=F_\mu(\varphi) g, \nonumber
\end{align}
since $\mu(h)={\mu}(g h g^{-1})$ and the map $h\mapsto f:=g h g^{-1}$ is a bijection from ${\mathrm{Aut}(X)}$ to ${\mathrm{Aut}(X)}$.
\end{proof}

Obviously, $F_\mu(\varphi)=\sum_{h\in {\mathrm{Aut}(X)}}\varphi h^{-1}\ {\mu}(h)=
\sum_{h\in {\mathrm{supp}(\mu)}}\varphi h^{-1}\ {\mu}(h)$, where $\mathrm{supp}(\mu):=\{h\in \mathrm{Aut}(X):\mu(h)\neq 0\}$. In Examples~\ref{exS1} and \ref{exS2} $|\mathrm{supp}(\mu_i)|\ll |G|$ for $i=1,2$, and hence in those cases summations on $\mathrm{supp}(\mu_i)$ are simpler than summations on the group $G$. The condition $|\mathrm{supp}(\mu)|\ll |G|$ is not rare in examples and is the main reason to build GEOs by means of permutant measures, instead of using the representation of GEOs as $G$-convolutions.

\begin{ex}\label{exS1andS2}
The GEOs associated with the permutant measures defined in Examples~\ref{exS1} and \ref{exS2} are respectively $F_{\mu_1}(\varphi)=\varphi \bar h^{-1}$ and
$F_{\mu_2}(\varphi)=c\varphi h_1^{-1}+c\varphi h_2^{-1}+c\varphi h_3^{-1}$.
\end{ex}

It is interesting to observe that the set $\PMe(G)$ of permutant measures with respect to $G$ is a lattice.
Indeed, if $\mu_1,\mu_2\in \PMe(G)$, then the measures $\mu',\mu''$ on $\mathrm{Aut}(X)$, respectively defined by setting $\mu'(h):=\min\{\mu_1(h),\mu_2(h)\}$ and $\mu''(h):=\max\{\mu_1(h),\mu_2(h)\}$, still belong to
$\PMe(G)$. Moreover, if $\mu\in \PMe(G)$ then $|\mu|\in \PMe(G)$.
Furthermore, $\PMe(G)$ is closed under linear combination.
Therefore, $ \PMe(G)$ has a natural structure of real vector space.
We can compute the dimension of $ \PMe(G)$ by considering the conjugation action of $G$ on $\mathrm{Aut}(X)$.


\begin{prop}\label{dimP}
$\dim \PMe(G)=\left|\sfrac{\mathrm{Aut}(X)}{G}\right|$.
\end{prop}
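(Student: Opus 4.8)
The plan is to identify $\PMe(G)$ with the space of real-valued functions on $\mathrm{Aut}(X)$ that are constant on the orbits of the conjugation action of $G$, and then to exhibit an explicit basis indexed by those orbits.

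First I would observe that, since $X$ is finite, $\mathrm{Aut}(X)$ is a finite set, so a finite signed measure $\mu$ on $\mathrm{Aut}(X)$ amounts to nothing more than the assignment $h \mapsto \mu(h) := \mu(\{h\})$ of a real number to each $h \in \mathrm{Aut}(X)$, with $\mu(H) = \sum_{h \in H}\mu(h)$ for every $H \subseteq \mathrm{Aut}(X)$; in particular every subset is automatically measurable. Thus, as a real vector space, the collection of all finite signed measures on $\mathrm{Aut}(X)$ is canonically $\R^{\mathrm{Aut}(X)}$. Under this identification, the defining condition $\mu(h) = \mu(ghg^{-1})$ for all $g \in G$ says exactly that the function $\mu$ is constant on each orbit of the conjugation action $g\cdot h := ghg^{-1}$ of $G$ on $\mathrm{Aut}(X)$: the orbit of $h$ is precisely $\{ghg^{-1} : g \in G\}$, so ``invariant along every orbit'' and ``invariant under every conjugation'' are the same statement. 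Hence $\PMe(G)$ is precisely the subspace of $\R^{\mathrm{Aut}(X)}$ consisting of functions that factor through the quotient map $\mathrm{Aut}(X) \twoheadrightarrow \sfrac{\mathrm{Aut}(X)}{G}$.

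Next I would write $\sfrac{\mathrm{Aut}(X)}{G} = \{\mathcal{O}_1, \ldots, \mathcal{O}_k\}$ for the finite set of distinct conjugation orbits, and for each $i$ define the signed measure $\mu_i$ by $\mu_i(h) = 1$ if $h \in \mathcal{O}_i$ and $\mu_i(h) = 0$ otherwise. Each $\mu_i$ is constant on orbits, hence $\mu_i \in \PMe(G)$. Since the orbits $\mathcal{O}_i$ are pairwise disjoint and nonempty, the measures $\mu_i$ have pairwise disjoint nonempty supports and are therefore linearly independent. Finally, given any $\mu \in \PMe(G)$, let $c_i$ denote the common value of $\mu$ on $\mathcal{O}_i$; then $\mu = \sum_{i=1}^{k} c_i \mu_i$, since the two sides agree at every $h \in \mathrm{Aut}(X)$. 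Thus $\{\mu_1, \ldots, \mu_k\}$ is a basis of $\PMe(G)$, and $\dim \PMe(G) = k = \left|\sfrac{\mathrm{Aut}(X)}{G}\right|$.

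I do not expect a genuine obstacle here: the finiteness of $X$ reduces the whole statement to elementary linear algebra on the finite set $\mathrm{Aut}(X)$. The only point deserving a moment's care is the translation of the permutant-measure condition into the statement that $\mu$ is a class function for $G$-conjugation (and the harmless remark that the two equivalent formulations given in the definition really do coincide); once that is in place, the orbit-indicator measures provide the basis essentially for free.
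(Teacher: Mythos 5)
Your proposal is correct and follows essentially the same route as the paper: both identify $\PMe(G)$ with the real-valued functions on $\mathrm{Aut}(X)$ that are constant on the conjugation orbits, i.e.\ with $\mathbb{R}^{\sfrac{\mathrm{Aut}(X)}{G}}$, and read off the dimension from the basis of orbit indicators. The only difference is cosmetic — you spell out the isomorphism and the linear independence of the indicator measures that the paper leaves as ``one could easily check.''
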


\begin{proof}
Consider a permutant measure $\mu$ on $\mathrm{Aut}(X)$.  We define the function $f_\mu \colon \sfrac{\mathrm{Aut}(X)}{G} \to \mathbb{R}$ by setting $f_\mu(\mathcal{O})=\mu(h)$, where $h \in \mathcal{O}$. Since $\mu$ is invariant under the conjugation action of $G$, $f_\mu$ is well defined. One could easily check that
the map $\mu \mapsto f_\mu$ is an isomorphism between $\PMe(G)$ and the space $\mathbb{R}^{\sfrac{\mathrm{Aut}(X)}{G}}$ of all real-valued functions on $\sfrac{\mathrm{Aut}(X)}{G}$. Since $\{\mathds{1}_\mathcal{O}\}_{\mathcal{O}\in \sfrac{\mathrm{Aut}(X)}{G}}$ is a basis for $\mathbb{R}^{\sfrac{\mathrm{Aut}(X)}{G}}$, $ \dim \mathbb{R}^{\sfrac{\mathrm{Aut}(X)}{G}}= \left|\sfrac{\mathrm{Aut}(X)}{G}\right|$. Hence, the statement is proved.
\end{proof}

Proposition~\ref{dimP} and the well-known Burnside's Lemma imply that $\dim \PMe(G)=\frac{1}{|G|}\sum_{g\in G}|\mathrm{Aut}(X)^g|$. We recall that $\mathrm{Aut}(X)^g$ denotes the set of elements fixed by the action of $g$, i.e., $\mathrm{Aut}(X)^g:= \{ h \in \mathrm{Aut}(X) | g h g^{-1}=h\}$.

Let us now recall the concept of permutant~\cite{CaFrQu18,CoFrQu22}, which is related to the one of permutant measure.

\begin{defn}\label{defpermutant}
We say that a subset $H\subseteq \mathrm{Aut}(X)$ is a \emph{permutant} for $G$ if either $H=\emptyset$ or $gHg^{-1}=H$ for every $g\in G$.
\end{defn}

Note that a subset $H$ of $\mathrm{Aut}(X)$ is a permutant for $G$ if and only if $H$ is a union of orbits
for the conjugation action of $G$ on $\mathrm{Aut}(X)$. It follows that the number of permutants for $G$ is equal to $2^{\left|\frac{\mathrm{Aut}(X)}{G}\right|}$.
Let us denote by $\Perm(G)$ the set of all permutants for $G$.
From Proposition~\ref{dimP} the next corollary immediately follows.

\begin{cor}\label{cordimPM}
$\dim \PMe(G)=\log_2 |\Perm(G)|$.
\end{cor}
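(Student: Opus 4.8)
The plan is to deduce the statement directly from Proposition~\ref{dimP} together with the counting observation stated in the text immediately before the corollary. First I would recall that, by Proposition~\ref{dimP}, $\dim \PMe(G) = \left|\sfrac{\mathrm{Aut}(X)}{G}\right|$, i.e.\ the dimension of $\PMe(G)$ equals the number of orbits of the conjugation action of $G$ on $\mathrm{Aut}(X)$.

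Next I would make precise the bijection between $\Perm(G)$ and the power set of the orbit space. By Definition~\ref{defpermutant}, a non-empty subset $H\subseteq\mathrm{Aut}(X)$ is a permutant for $G$ exactly when $gHg^{-1}=H$ for every $g\in G$, that is, exactly when $H$ is invariant under the conjugation action; and a subset of a $G$-set is invariant if and only if it is a union of orbits. Hence the map sending a permutant $H$ to the set of orbits contained in $H$ is a well-defined bijection from $\Perm(G)$ onto the power set of $\sfrac{\mathrm{Aut}(X)}{G}$ (with the empty permutant corresponding to the empty union of orbits). Consequently $|\Perm(G)| = 2^{\left|\sfrac{\mathrm{Aut}(X)}{G}\right|}$.

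Combining the two facts yields $\log_2|\Perm(G)| = \left|\sfrac{\mathrm{Aut}(X)}{G}\right| = \dim\PMe(G)$, which is the claim. I expect no real obstacle here: the only point needing a word of justification is the characterization of conjugation-invariant subsets as unions of orbits, which is routine and is moreover already recorded in the text right before the corollary, so the argument is essentially a one-line combination of Proposition~\ref{dimP} with that remark.
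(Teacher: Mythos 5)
Your argument is correct and is exactly the one the paper intends: it combines Proposition~\ref{dimP} with the observation (recorded in the text just before the corollary) that permutants are precisely the unions of conjugation orbits, so that $|\Perm(G)|=2^{\left|\sfrac{\mathrm{Aut}(X)}{G}\right|}$. Your only addition is to spell out the bijection with the power set of the orbit space explicitly, which the paper leaves implicit.
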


The following definition extends the one of versatile group (cf.~\cite{CaFrQu18}) and is of use in studying permutants.
\begin{defn}\label{defkwv}
If $k$ is a positive integer, we say that the group $G\subseteq \mathrm{Aut}(X)$ is \emph{$k$-weakly versatile} if for every pair $(x,z)\in X\times X$ with $x\neq z$ and every subset $S$ of $X$  with $|S|\le k$, a $g\in G$ exists such that $g(x)=x$ and $g(z)\notin S$.
\end{defn}

The previous definition allows us to highlight an interesting property of permutants.

\begin{lem}\label{numperm}
If $G$ is $k$-weakly versatile, then every permutant $H\neq \emptyset,\{\id_X\}$ has cardinality strictly greater than $k$.
\end{lem}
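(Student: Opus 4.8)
The plan is to take an arbitrary permutant $H$ with $H\neq\emptyset$ and $H\neq\{\id_X\}$, pick an element $h\in H$ with $h\neq\id_X$, and show that the conjugacy orbit of $h$ under $G$ — which is contained in $H$ since $H$ is a union of orbits — already has more than $k$ elements. Since $h\neq\id_X$, there is a point $x\in X$ with $h(x)\neq x$; set $z:=h(x)$, so $x\neq z$. The idea is to produce $k+1$ pairwise distinct conjugates $g h g^{-1}$ of $h$, distinguishing them by where they send the fixed point $x$: I will arrange that the values $(ghg^{-1})(x)=g(h(g^{-1}(x)))$ are all different, which forces the conjugates themselves to be different.

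First I would build inductively a sequence $g_0=\id_X, g_1, g_2, \ldots, g_k\in G$ such that each $g_i$ fixes $x$ (hence $g_i^{-1}$ fixes $x$ too, so $(g_ihg_i^{-1})(x)=g_i(h(x))=g_i(z)$) and such that the points $g_0(z), g_1(z), \ldots, g_k(z)$ are pairwise distinct. The induction step is exactly where $k$-weak versatility enters: having chosen $g_0,\ldots,g_{i-1}$ with $g_0(z),\ldots,g_{i-1}(z)$ distinct, apply Definition~\ref{defkwv} to the pair $(x,z)$ and the set $S:=\{g_0(z),\ldots,g_{i-1}(z)\}$, which has cardinality $i\le k$; this yields $g_i\in G$ with $g_i(x)=x$ and $g_i(z)\notin S$, so $g_0(z),\ldots,g_i(z)$ are still pairwise distinct. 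After $k$ steps we have $k+1$ such elements.

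It then remains to observe that the $k+1$ conjugates $g_0hg_0^{-1},\ldots,g_khg_k^{-1}$ are pairwise distinct: if $g_ihg_i^{-1}=g_jhg_j^{-1}$ then in particular these permutations agree at $x$, giving $g_i(z)=g_j(z)$, hence $i=j$ by construction. All of these conjugates lie in $H$ because $gHg^{-1}=H$ for every $g\in G$ and $h\in H$, so $|H|\ge k+1>k$, which is the claim. I do not expect any serious obstacle here; the only point requiring a little care is to keep track of the fact that $g_i$ fixing $x$ also makes $g_i^{-1}$ fix $x$, which is what lets the nuisance term $g_i^{-1}(x)$ collapse to $x$ so that the distinguishing value is simply $g_i(z)$. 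If one wanted, the argument also shows the slightly stronger statement that the orbit of \emph{every} non-identity element of $H$ has size at least $k+1$.
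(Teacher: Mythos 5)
Your proof is correct. The key mechanism is the same one the paper uses --- conjugate a non-identity $h\in H$ by elements $g$ fixing $x$, so that the conjugate is distinguished by its value $(ghg^{-1})(x)=g(z)$, and invoke $k$-weak versatility to push $g(z)$ outside a forbidden set of at most $k$ points --- but the logical packaging differs. The paper argues by contradiction with a \emph{single} application of Definition~\ref{defkwv}: assuming $H=\{h_1,\dots,h_r\}$ with $r\le k$, it takes $S=\{h_1(x),\dots,h_r(x)\}$, finds $g$ with $g(x)=x$ and $g(z)\notin S$, and then notes that $gh_1g^{-1}$ must equal some $h_i$, which forces $g(z)=h_i(x)\in S$. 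You instead apply the hypothesis $k$ times in a direct induction, growing the set of already-realized values $g_j(z)$ and producing $k+1$ pairwise distinct conjugates of a single $h$. Both arguments are valid; the paper's is shorter, while yours is constructive and directly yields the slightly sharper fact that the conjugacy orbit of every non-identity element of $H$ has cardinality at least $k+1$ (a fact that also follows from the paper's formulation applied to that orbit, since a single conjugacy orbit is itself a permutant not equal to $\emptyset$ or $\{\id_X\}$).
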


\begin{proof}
By contradiction, let us assume that a non-empty permutant $H=\{h_1,\ldots,h_r\}\neq \{\id_X\}$ exists,
with $1\le r\le k$.
Since $H\neq \{\id_X\}$, we can assume that $h_1$ is not the identity. Let us take a point $x\in X$ such that $h_1(x)\neq x$ and set $z:=h_1(x)$, $S:=\{h_1(x),\ldots,h_r(x)\}$. $G$ is $k$-weakly versatile and hence a $g\in G$ exists, such that $g(x)=x$ and $g(z)\notin S$. Since $gHg^{-1}=H$, an index $i$ exists such that
$gh_1=h_ig$. It follows that $g(z)=g(h_1(x))=h_i(g(x))=h_i(x)\in S$, against the assumption that $g(z)\notin S$.
\end{proof}

 We stress that when the group $G$ becomes larger and larger the lattice $\PMe(G)$ becomes smaller and smaller. This duality implies that the method described by Proposition~\ref{mainprop} is particularly interesting when $G$ is large. In some sense, this duality is analogous  to the one described in~\cite[Subsection 3.1]{FrJa16}.

%


\section{Representation of linear GEOs via permutant measures}\label{main_result}

A natural question arises from Proposition~\ref{mainprop}: Which linear GEOs can be represented as GEOs associated with a permutant measure?

We can prove the following result.

\begin{thm}\label{mainthm}
If $G$ transitively acts on $X$, then for every linear group equivariant operator $F$ for $(\R^X,G)$ a permutant measure ${\mu}$ exists such that
$$F(\varphi)=F_\mu(\varphi):=\sum_{h\in {\mathrm{Aut}(X)}}\varphi h^{-1}\ {\mu}(h)$$ for every $\varphi\in {\R^X}$, and
$\sum_{h\in\mathrm{Aut}(X)}{|\mu(h)|}=
\max_{\varphi\in\R^X\setminus\{\mathbf{0}\}}\frac{\|F(\varphi)\|_\infty}{\|\varphi\|_\infty}$.
\end{thm}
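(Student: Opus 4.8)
The plan is to represent the linear GEO $F$ as a matrix, identify which matrices arise this way, and then decompose that matrix into permutation matrices using the Birkhoff--von Neumann theorem, making sure the combination of permutations we obtain is conjugation-invariant so that it yields a permutant measure.

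\medskip

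First I would fix the canonical basis $\{\mathds{1}_{x_j}\}_j$ of $\R^X$ and write $F$ as a matrix $A = (a_{ij})$, where $F(\mathds{1}_{x_j}) = \sum_i a_{ij}\,\mathds{1}_{x_i}$. The equivariance condition $F(\varphi g) = F(\varphi) g$ for all $g \in G$ translates into a commutation relation between $A$ and the permutation matrices $P_g$ representing the right action of $G$; concretely, $A$ commutes with every $P_g$, which says that $a_{ij}$ depends only on the $G$-orbit of the pair $(x_i, x_j)$ under the diagonal action of $G$ on $X \times X$. Next I would use transitivity of the $G$-action on $X$ to normalize the matrix: because every row (and every column) of $A$ is a permutation of the first row, and because we may assume $F \neq 0$, the common value $c := \max_\varphi \|F(\varphi)\|_\infty / \|\varphi\|_\infty$ equals $\sum_j |a_{1j}|$ (the $\ell^1$-norm of a row, which by transitivity is the same for every row), and likewise equals the common column $\ell^1$-norm. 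Dividing $A$ by $c$ (and, to handle signs, first passing to $|A|$ componentwise — I need to check that $|A|$ inherits the orbit-constancy and hence the row/column sum property, which it does since taking absolute values commutes with the orbit identification) produces a nonnegative matrix with all row sums and all column sums equal to $1$: a doubly stochastic matrix $B$.

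\medskip

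Now I would invoke the Birkhoff--von Neumann theorem to write $B = \sum_{k} \lambda_k P_{\sigma_k}$, a convex combination of permutation matrices $P_{\sigma_k}$, $\sigma_k \in \mathrm{Aut}(X)$. The delicate point — and what I expect to be the main obstacle — is that a raw Birkhoff decomposition need \emph{not} be invariant under conjugation by $G$, so the naive assignment $\mu(h) := \lambda_k$ when $h = \sigma_k$ will generally fail to be a permutant measure. The fix is an averaging/symmetrization argument: since $B$ itself is $G$-conjugation invariant (it commutes with all $P_g$), for each $g$ the matrix $\sum_k \lambda_k P_{g \sigma_k g^{-1}}$ is again a Birkhoff decomposition of $B$; averaging over $g \in G$ gives a decomposition whose coefficient function on $\mathrm{Aut}(X)$ is constant on conjugation orbits. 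One must verify this averaged coefficient function is still a valid (nonnegative, summing to $1$) combination representing $B$ — that is routine since averaging preserves convexity and the value of the sum. Then $\mu_0(h) := (\text{averaged coefficient at } h)$ defines a \emph{positive} permutant measure with $F_{\mu_0} = $ the operator with matrix $B$, and hence $\mu := $ the signed permutant measure obtained by restoring the signs of $A$ and multiplying by $c$ satisfies $F = F_\mu$.

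\medskip

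Finally I would check the norm identity. By construction $\sum_{h} |\mu(h)| = c \sum_{h} \mu_0(h) = c$ (using that $\mu_0$ is positive with total mass $1$), and I argued above that $c = \max_{\varphi \neq \mathbf{0}} \|F(\varphi)\|_\infty / \|\varphi\|_\infty$; the inequality $\|F(\varphi)\|_\infty \le \sum_h |\mu(h)|\,\|\varphi\|_\infty$ is immediate from the triangle inequality and $\|\varphi h^{-1}\|_\infty = \|\varphi\|_\infty$, while equality is attained by a suitable sign pattern $\varphi$ (take $\varphi$ supported appropriately so that all terms $\varphi h^{-1}(x_1)$ add up with the same sign as $\mu(h)$ at the coordinate $x_1$), using transitivity to ensure such a $\varphi$ with $\|\varphi\|_\infty = 1$ exists. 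This closes the argument; the only genuinely nontrivial step is the conjugation-invariant refinement of the Birkhoff decomposition, and the sign bookkeeping that lets us reduce from the signed matrix $A$ to the doubly stochastic matrix $B$.
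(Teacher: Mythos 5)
Your overall architecture coincides with the paper's: represent $F$ by a matrix, use equivariance plus transitivity to show every row and column is a permutation of one fixed tuple (hence all row and column $\ell^1$-sums agree), apply Birkhoff--von Neumann, and then average the coefficients over the conjugation action of $G$ to force invariance. Your averaging step is exactly the paper's identity $B^\oplus=\frac{1}{|G|}\sum_{g\in G}P(g)B^\oplus P(g)^{-1}$, and your identification of the operator norm with the common row $\ell^1$-sum is also how the paper closes the norm identity, via the test function $\bar\varphi=\sum_j\sgn(b_{1j})\mathds{1}_{x_j}$.

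The gap is the sign bookkeeping, which you flag but never resolve, and which does not work as literally described. You propose to apply Birkhoff to the componentwise absolute value $|A|/c$ and then ``restore the signs of $A$'' to each permutation in the decomposition. But a permutation matrix occurring in a Birkhoff decomposition of $|A|$ is only guaranteed to be supported on \emph{nonzero} entries of $A$; those entries may have different signs, in which case there is no sign to restore. For instance, let $X=\{1,2,3,4\}$, let $G$ be the cyclic group generated by the $4$-cycle $1\mapsto 2\mapsto 3\mapsto 4\mapsto 1$, and let $A$ be the circulant matrix with first column $(1,-1,1,-1)^t$; this is a linear GEO for a transitive $G$, $|A|$ is the all-ones matrix, and the valid decomposition $|A|=\frac{1}{6}\sum_{\sigma\in S_4}P(\sigma)$ contains the transposition exchanging $1$ and $2$, whose support meets both positive and negative entries of $A$. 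A further wrinkle is that, since you average over $G$ \emph{before} restoring signs, you would also need the restored sign to be constant on each conjugation orbit, which again is not automatic for an arbitrary decomposition. The paper avoids all of this by splitting $F=F^\oplus-F^\ominus$ into positive and negative parts first, running Birkhoff and the $G$-average on each nonnegative part separately, and then proving (Proposition~\ref{propc} and Corollary~\ref{corc}) that the two parts cannot both charge the same matrix position --- which is also precisely the fact you need to justify the claims that $\sum_h|\mu(h)|=c$ (no cancellation between the two parts) and that a maximizing sign pattern $\bar\varphi$ is well defined. With ``pass to $|A|$ and restore signs'' replaced by this positive/negative splitting, your argument becomes the paper's proof.
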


In order to prove this statement, let us consider
the matrix $B=(b_{ij})$ associated with $F$ with respect to the basis
$\{{\mathds{1}}_{x_1},\ldots,{\mathds{1}}_{x_n}\}$.

\begin{rem}
We observe that ${\mathds{1}}_{x} {h}^{-1}={\mathds{1}}_{{h}(x)}$ for every $h\in \mathrm{Aut}(X)$ and every $x\in X$.
\end{rem}

In the following, for every $g\in G$ we will denote by $\sigma_g:\{1,\ldots,n\}\to \{1,\ldots,n\}$ the function defined by setting
$\sigma_g(j)=i$ if and only if $g(x_j)=x_i$. We observe that $\sigma_{g^{-1}}=\sigma_g^{-1}$.

We need the following lemmas.

\begin{lem}\label{lempermutation}
An $n$-tuple of real numbers $\alpha=(\alpha_1,\ldots,\alpha_{n})$ exists such that each row and each column of ${B}$ can be obtained by permuting $\alpha$.
\end{lem}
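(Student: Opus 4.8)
The plan is to exploit the equivariance of $F$ to relate entries of the matrix $B$ across rows and columns. Since $G$ acts transitively on $X$, for any pair of indices $i,i'$ there is some $g\in G$ with $g(x_i)=x_{i'}$, i.e. $\sigma_g(i)=i'$. The equivariance condition $F(\varphi g)=F(\varphi)g$, written in coordinates with respect to the canonical basis, should translate into a conjugation relation between $B$ and the permutation matrices $P_g$ representing the action of $g$. First I would make this precise: the right action of $g$ on $\R^X$ sends ${\mathds{1}}_{x_j}\mapsto {\mathds{1}}_{x_j}g = {\mathds{1}}_{g^{-1}(x_j)}$, so in the basis $\{{\mathds{1}}_{x_1},\dots,{\mathds{1}}_{x_n}\}$ the operator $\varphi\mapsto\varphi g$ is given by the permutation matrix $P_g$ with $(P_g)_{ij}=1$ iff $x_i = g^{-1}(x_j)$, i.e. iff $\sigma_g(i)=j$. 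Then $F(\varphi g)=F(\varphi)g$ for all $\varphi$ becomes the matrix identity $B P_g = P_g B$ for every $g\in G$, that is, $B$ commutes with all the $P_g$.

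The core of the argument is then purely combinatorial. From $BP_g = P_g B$ I would read off that $b_{ij} = b_{\sigma_g(i)\,\sigma_g(j)}$ for all $i,j$ and all $g\in G$. In particular, fixing a reference index, say $1$, I can define $\alpha_k := b_{1k}$ for $k=1,\dots,n$, so that the first row of $B$ is exactly $\alpha=(\alpha_1,\dots,\alpha_n)$. Now take any row $i$: by transitivity choose $g$ with $\sigma_g(1)=i$; then $b_{ij} = b_{\sigma_g(1)\,\sigma_g(\sigma_g^{-1}(j))}=\dots$ — more cleanly, $b_{ij}=b_{\sigma_g(1),\,j}$ requires care, so instead I use the relation in the form $b_{\sigma_g(1)\,\sigma_g(k)} = b_{1k}=\alpha_k$, which shows that the $i$-th row of $B$, namely $(b_{i,\sigma_g(k)})_{k}$ as $k$ ranges over $\{1,\dots,n\}$, is a rearrangement of $(\alpha_k)_k$; that is, the $i$-th row is obtained from $\alpha$ by applying the permutation $\sigma_g$ to the column indices. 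Hence every row of $B$ is a permutation of $\alpha$. For the columns, I would either repeat the argument using the transpose of the commutation relation (note $P_g^{-1}=P_g^T = P_{g^{-1}}$, and $B$ commutes with $P_{g^{-1}}$ as well), or observe directly that $b_{ij}=b_{\sigma_g(i)\,\sigma_g(j)}$ lets us move down a column by the same device: fix the column, vary $g$ over the stabilizer-coset structure, and conclude each column is a rearrangement of $\alpha$ too. I should double-check that the \emph{same} tuple $\alpha$ works for columns; since $B$ commutes with all $P_g$ and transitivity gives, for any target column index, a $g$ carrying column $1$ to it, the first column is also a permutation of $\alpha$ (being $(b_{i1})_i = (b_{\sigma_g(1)\,\sigma_g(1)})$ with suitable $g$ — here one uses that $\sigma_g$ can be chosen to fix $1$ or move it as needed), and the argument propagates.

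The main obstacle I anticipate is bookkeeping with the permutations: being careful about whether the action is on row indices or column indices, whether one needs $\sigma_g$ or $\sigma_g^{-1}$, and ensuring that transitivity is genuinely enough (as opposed to needing $2$-transitivity) to get \emph{both} the row and the column statements with a single common tuple $\alpha$. The resolution is that transitivity on $X$ gives transitivity of $\{\sigma_g : g\in G\}$ on $\{1,\dots,n\}$, which suffices to move any row to any other row and any column to any other column; the commutation $BP_g=P_gB$ then forces the diagonal-like rigidity $b_{ij}=b_{\sigma_g(i)\sigma_g(j)}$ that simultaneously controls rows and columns. Once the index manipulations are pinned down, the lemma follows immediately, with $\alpha$ taken to be any fixed row (equivalently, any fixed column) of $B$.
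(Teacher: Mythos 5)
Your derivation of the relation $b_{ij}=b_{\sigma_g(i)\sigma_g(j)}$ from equivariance (via $BP_g=P_gB$) and your use of transitivity to conclude that every row of $B$ is a permutation of the first row, and every column a permutation of the first column, match the paper's argument exactly. The gap is in the step you yourself flagged as needing a double-check: showing that the \emph{same} tuple $\alpha$ works for both rows and columns. Your proposed resolution does not work. The relation $b_{ij}=b_{\sigma_g(i)\sigma_g(j)}$ moves the row index and the column index \emph{simultaneously}, so it can only map a row onto another row and a column onto another column; it cannot map the first row onto the first column. Your parenthetical ``$(b_{i1})_i=(b_{\sigma_g(1)\,\sigma_g(1)})$ with suitable $g$'' in fact exhibits a \emph{diagonal} entry, which by the relation always equals $b_{11}$, so it says nothing about the off-diagonal entries of the first column. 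Mere transitivity gives no element of $G$ that swaps the roles of the two indices, and the naive attempt to build a value-preserving bijection between row $1$ and column $1$ by choosing, for each $k$, a $g$ with $\sigma_g(k)=1$ fails because nothing forces the resulting map $k\mapsto\sigma_g(1)$ to be injective.

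The paper closes this gap with a short counting argument that is absent from your proposal: since all rows are permutations of one another, each real value $y$ occurs the same number $r(y)$ of times in every row, hence $nr(y)$ times in $B$; likewise it occurs $ns(y)$ times in $B$ counting by columns; therefore $r(y)=s(y)$ for every $y$, and the multiset of entries of any row coincides with that of any column. You need this (or an equivalent) global counting step to finish; the local index manipulation you sketch cannot deliver it.
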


\begin{proof}
Let us choose a function ${\mathds{1}}_{x_j}$ and a permutation $g \in G$.
By equivariance we have that
$${F}({\mathds{1}}_{x_j}g )= {F}({\mathds{1}}_{x_j})g.$$
The left-hand side of the equation can be rewritten as:
$${F}({\mathds{1}}_{x_j}g )= {F}({\mathds{1}}_{g^{-1}(x_j)} )= \sum_{i=1}^{n}b_{i\sigma_g^{-1}(j)}{\mathds{1}}_{x_i}.$$
	
On the right-hand side we get
$${F}({\mathds{1}}_{x_j})g=\left(\sum_{i=1}^{n}b_{ij}{\mathds{1}}_{x_i}\right) g= \sum_{i=1}^{n}b_{ij}({\mathds{1}}_{x_i}g)= \sum_{i=1}^{n}b_{ij}({\mathds{1}}_{g^{-1}(x_i)})=\sum_{s=1}^{n}b_{\sigma_g(s)j}{\mathds{1}}_{x_s}$$
by setting $x_s=g^{-1}(x_i)$.
Therefore, we obtain the following equation:
$$\sum_{i=1}^{n}b_{i\sigma_g^{-1}(j)}{\mathds{1}}_{x_i}=\sum_{s=1}^{n}b_{\sigma_g(s)j}{\mathds{1}}_{x_s}.$$
This immediately implies that $b_{i\sigma_g^{-1}(j)}=b_{\sigma_g(i)j}$, for any $i \in \{1, \dots, n\}$.
Since this equality holds for
any $j \in \{1, \dots, n\}$ and any $g \in G$, we have that $b_{ij}=b_{\sigma_g(i)\sigma_g(j)}$ for every $i,j \in \{1, \dots, n\}$
and every $g\in G$.
	
Now we are ready to show that all the rows of ${B}$ are permutations of the first row, and all the columns are permutations of the first column.
Since $G$ is transitive, for every $ p,q\in\{1,\dots,n\}$ there exists
$g_{pq}\in G$ such that $g_{pq}(x_p)=x_q$.
Consider the $\bar \imath$-th row of $B$. We know that $b_{ \bar\imath j}
=b_{\sigma_{g_{\bar\imath 1}}(\bar \imath) \sigma_{g_{\bar\imath 1}}(j)}
=b_{ 1\sigma_{g_{\bar\imath 1}}(j)}$, for any $j \in \{1, \dots , n\}$.
Since $\sigma_{g_{\bar\imath 1}}$ is a permutation, the $\bar\imath$-th row is a permutation of the first row.
By the same arguments, we can assert that every column of $B$ is a permutation of the
first column of $B$.
	
Let us now consider a real number $y$, and denote by $r(y)$ (respectively $s(y)$) the number of times $y$ occurs in each row (respectively column) of ${B}$.
Both $nr(y)$ and $ns(y)$ represent the number of times $y$ appears in $B$.
Since $nr(y)=ns(y)$, each row and column contains the same elements (counted with multiplicity). Hence, the statement of our lemma is proved.
\end{proof}

The following result is well known~\cite{DKPU18}.

\begin{lem}[{Birkhoff–von Neumann decomposition}]\label{lemPk}
Let $M$ be a $n\times n$ real matrix with non-negative entries, such that both the sum of the elements of each row and the sum of the elements of each column is equal to $\bar c$. Then for every $h\in {\mathrm{Aut}(X)}$ a non-negative real number $c(h)$ exists such that
$\sum_{h\in {\mathrm{Aut}(X)}}c(h) = \bar c$ and $M=\sum_{h\in {\mathrm{Aut}(X)}}c(h) P(h)$, where $P(h)$ is the permutation matrix associated with $h$.
\end{lem}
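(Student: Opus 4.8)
The plan is to prove the statement by the classical argument combining Hall's marriage theorem with an induction on the number of positive entries of $M$; the hypothesis that all row sums and all column sums share the common value $\bar c$ is exactly what makes this argument go through.

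First I would dispose of the degenerate case $\bar c=0$: since the entries of $M$ are non-negative and each row sums to $0$, we have $M=0$, and it suffices to set $c(h)=0$ for every $h\in\mathrm{Aut}(X)$. For the main case $\bar c>0$ I would induct on the number $N$ of positive entries of $M$. The key step is to produce a single permutation $h_0\in\mathrm{Aut}(X)$ whose permutation matrix $P(h_0)$ is ``supported'' by $M$, i.e.\ $m_{ij}>0$ at every position $(i,j)$ where $P(h_0)$ has a $1$. I would obtain such an $h_0$ from Hall's marriage theorem applied to the bipartite graph joining row $i$ to column $j$ whenever $m_{ij}>0$: a perfect matching in this graph is precisely such a permutation. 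The hard part is verifying Hall's condition, and this is where the equal-sum hypothesis is used. Given a set $I$ of rows, let $J$ be the set of columns meeting some row of $I$ in a positive entry; all the mass carried by the rows in $I$, namely $|I|\,\bar c$, lies inside the columns of $J$, whose total mass is $|J|\,\bar c$, so $|I|\,\bar c\le |J|\,\bar c$ and hence $|I|\le|J|$. Thus Hall's condition holds and the required $h_0$ exists.

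Once $h_0$ is in hand, I would set $c(h_0):=\min\{m_{ij} : P(h_0)_{ij}=1\}$, which is strictly positive, and pass to $M':=M-c(h_0)P(h_0)$. Subtracting $c(h_0)$ from exactly one entry in each row and each column keeps all entries non-negative and turns every row sum and every column sum into $\bar c-c(h_0)$, so $M'$ satisfies the same hypotheses with the new common value $\bar c':=\bar c-c(h_0)\ge 0$. Moreover, at a position realizing the minimum the corresponding entry of $M'$ vanishes, while no new positive entries are created, so $M'$ has strictly fewer positive entries than $M$. Applying the inductive hypothesis to $M'$ yields non-negative coefficients $c'(h)$ with $\sum_{h}c'(h)=\bar c'$ and $M'=\sum_{h}c'(h)P(h)$; then $M=c(h_0)P(h_0)+\sum_{h}c'(h)P(h)$ realizes $M$ as the desired non-negative combination, and the coefficients sum to $c(h_0)+\bar c'=\bar c$. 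The only remaining point to check is that the induction terminates, which follows since the number of positive entries strictly decreases at each step and is bounded below by $0$.
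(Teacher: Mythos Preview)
Your argument is correct: this is the standard proof of the Birkhoff--von Neumann theorem via Hall's marriage theorem and induction on the number of positive entries, and the details you give (verification of Hall's condition using the equal row and column sums, the choice of $c(h_0)$ as the minimum along the supporting permutation, and the strict decrease in the number of positive entries) all go through as stated. The only cosmetic point is that the induction is cleanest if phrased for all $\bar c\ge 0$ simultaneously, so that when $M'$ happens to have $\bar c'=0$ you are still formally invoking the inductive hypothesis rather than switching back to the degenerate case; but this is a matter of presentation, not substance.

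As for comparison with the paper: there is nothing to compare. The paper does not prove this lemma at all; it simply records it as a well-known result and cites the literature. Your write-up therefore supplies a self-contained proof where the paper offers none, which is a genuine addition if the goal is a standalone exposition.
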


We recall that the permutation matrix associated with the permutation $h:X\to X$ is the $n\times n$ real matrix $({p_{ij}(h)})$ defined by setting ${p_{ij}(h)}=1$ if ${h}(x_j)=x_i$ and ${p_{ij}(h)}=0$ if ${h}(x_j)\neq x_i$. Equivalently, we can define the permutation matrix associated with the permutation $h:X\to X$ as the $n\times n$ real matrix $P(h)$ such that $P(h)e_j =e_{\sigma_h(j)}$ for every column vector
$e_j:=\prescript{t}{}(0,\ldots,1,\ldots,0)\in \R^n$ (where $1$ is in the $j$-th position).

We observe that $P\left(h^{-1}\right)=P(h)^{-1}$ and $P(h_1h_2)=P(h_1)P(h_2)$ for every $h,h_1,h_2\in {\mathrm{Aut}(X)}$.

\begin{rem}
In general, the representation ${M}=\sum_{h\in {\mathrm{Aut}(X)}}c(h) P(h)$, stated in Lemma~\ref{lemPk}, is not unique. As an example, consider the set $X=\{1,2,3\}$ and the group $G = \mathrm{Aut}(X)$. Let ${F}\colon \mathbb{R}^X\to \mathbb{R}^X$ be the
linear application that maps ${\mathds{1}}_{j}$ to $\sum_{i\in X}\mathds{1}_i$, for any $j \in X$. One could easily check that ${F}$ is
a linear GEO for $(\R^X,G)$. Indeed, we have that ${F}({\mathds{1}}_{j} h)={F}({\mathds{1}}_{j})={F}({\mathds{1}}_{j}) h$ for any $j \in X$ and any $h \in \mathrm{Aut}(X)$. The matrix ${B}$ associated with ${F}$ with respect to the basis $\{{\mathds{1}}_{j}\}_j$ is:
	$${B}=\begin{pmatrix}
	1 & 1 & 1 \\
	1 & 1 & 1 \\
	1 & 1 & 1
	\end{pmatrix}.$$
	One could represent ${B}$ at least in two different ways:
	$${B}=
	\begin{pmatrix}
	1 & 0 & 0 \\
	0 & 1 & 0 \\
	0 & 0 & 1
	\end{pmatrix}+
	\begin{pmatrix}
	0 & 1 & 0 \\
	0 & 0 & 1 \\
	1 & 0 & 0
	\end{pmatrix}+
	\begin{pmatrix}
	0 & 0 & 1 \\
	1 & 0 & 0 \\
	0 & 1 & 0
	\end{pmatrix} $$ and $${B}=
	\begin{pmatrix}
	0 & 0 & 1 \\
	0 & 1 & 0 \\
	1 & 0 & 0
	\end{pmatrix}+
	\begin{pmatrix}
	0 & 1 & 0 \\
	1 & 0 & 0 \\
	0 & 0 & 1
	\end{pmatrix}+
	\begin{pmatrix}
	1 & 0 & 0 \\
	0 & 0 & 1 \\
	0 & 1 & 0
	\end{pmatrix}.$$
\end{rem}

We proceed in our proof of Theorem~\ref{mainthm} by taking the linear maps ${F^\oplus},{F^\ominus}:\R^X\to\R^X$ defined by setting
${F^\oplus}({\mathds{1}}_{x_j}) := \sum_{i=1}^{n}\max \{b_{ij},0\} {\mathds{1}}_{x_i}$
and
${F^\ominus}({\mathds{1}}_{x_j}) := \sum_{i=1}^{n}\max \{-b_{ij},0\} {\mathds{1}}_{x_i}$
for every index $j\in \{1,\ldots,n\}$. We can easily check that
\begin{enumerate}
  \item ${F^\oplus},{F^\ominus}$ are linear GEOs;
  \item The matrices associated with
${F^\oplus}$
and
${F^\ominus}$ with respect to the basis $\{{\mathds{1}}_{x_1},\ldots,{\mathds{1}}_{x_n}\}$ of $\mathbb{R}^X$ are
${B^\oplus}=\left(b^\oplus_{ij}\right)=\left(\max\{b_{ij},0\}\right)$
and
${B^\ominus}=\left(b^\ominus_{ij}\right)=\left(\max\{-b_{ij},0\}\right)$, respectively (in particular, ${B^\oplus},{B^\ominus}$ are non-negative matrices);
  \item $F={F^\oplus}-{F^\ominus}$ and $B={B^\oplus}-{B^\ominus}$;
  \item \label{property4} Lemma~\ref{lempermutation} and the definitions of ${B^\oplus}$, ${B^\ominus}$ imply that
two $n$-tuples of real numbers
$\alpha^\oplus=(\alpha^\oplus_1,\ldots,\alpha^\oplus_{n})$,
$\alpha^\ominus=(\alpha^\ominus_1,\ldots,\alpha^\ominus_{n})$ exist such that each row and each column of ${B^\oplus}$ can be obtained by permuting $\alpha^\oplus$, and each row and each column of ${B^\ominus}$ can be obtained by permuting $\alpha^\ominus$.
\end{enumerate}


From Property (\ref{property4}) and Lemma~\ref{lemPk} this result follows:

\begin{cor}\label{cor1}
For every $h\in {\mathrm{Aut}(X)}$ two non-negative real numbers $c^\oplus(h),c^\ominus(h)$ exist, such that ${F^\oplus}(\varphi)=\sum_{h\in {\mathrm{Aut}(X)}}{c^\oplus(h)}\varphi {h}^{-1}$ and
${F^\ominus}(\varphi)=\sum_{h\in {\mathrm{Aut}(X)}}{c^\ominus(h)}\varphi {h}^{-1}$ for every $\varphi\in {\R^X}$.
\end{cor}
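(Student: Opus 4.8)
The plan is to apply the Birkhoff–von Neumann decomposition of Lemma~\ref{lemPk} separately to the two non-negative matrices $B^\oplus$ and $B^\ominus$, and then to reinterpret the resulting matrix identities as identities between operators on $\R^X$.

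First I would check that $B^\oplus$ and $B^\ominus$ meet the hypotheses of Lemma~\ref{lemPk}. Both are non-negative by construction, as already observed. By Property~(\ref{property4}), each row and each column of $B^\oplus$ is obtained by permuting the fixed $n$-tuple $\alpha^\oplus=(\alpha^\oplus_1,\dots,\alpha^\oplus_n)$; hence all rows and all columns of $B^\oplus$ share the common sum $\bar c^\oplus:=\sum_{i=1}^{n}\alpha^\oplus_i$, and likewise all rows and columns of $B^\ominus$ share the common sum $\bar c^\ominus:=\sum_{i=1}^{n}\alpha^\ominus_i$. Lemma~\ref{lemPk} then applies to $M=B^\oplus$ and to $M=B^\ominus$, producing non-negative reals $\{c^\oplus(h)\}_{h\in\mathrm{Aut}(X)}$ and $\{c^\ominus(h)\}_{h\in\mathrm{Aut}(X)}$ with $\sum_{h}c^\oplus(h)=\bar c^\oplus$, $\sum_{h}c^\ominus(h)=\bar c^\ominus$, and
$$B^\oplus=\sum_{h\in\mathrm{Aut}(X)}c^\oplus(h)\,P(h),\qquad B^\ominus=\sum_{h\in\mathrm{Aut}(X)}c^\ominus(h)\,P(h).$$

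It then remains to translate these matrix identities back to $\R^X$. For each $h\in\mathrm{Aut}(X)$, the linear operator on $\R^X$ whose matrix in the basis $\{{\mathds{1}}_{x_1},\dots,{\mathds{1}}_{x_n}\}$ is $P(h)$ is exactly the map $\varphi\mapsto\varphi h^{-1}$: indeed $P(h)$ sends the basis column $e_j$ to $e_{\sigma_h(j)}$, which corresponds to ${\mathds{1}}_{x_{\sigma_h(j)}}={\mathds{1}}_{h(x_j)}={\mathds{1}}_{x_j}h^{-1}$ by the remark above, and right composition with $h^{-1}$ is linear. Since $B^\oplus$ and $B^\ominus$ are the matrices of $F^\oplus$ and $F^\ominus$, the two displayed equalities say precisely that $F^\oplus(\varphi)=\sum_{h}c^\oplus(h)\,\varphi h^{-1}$ and $F^\ominus(\varphi)=\sum_{h}c^\ominus(h)\,\varphi h^{-1}$ for every $\varphi\in\R^X$, which is the claim.

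I do not expect a genuine obstacle here: the substantive point is the verification that $B^\oplus$ and $B^\ominus$ have constant row and column sums, and this is provided for free by Property~(\ref{property4}), which itself encodes the equivariance of $F^\oplus$ and $F^\ominus$ via Lemma~\ref{lempermutation}. The only other thing requiring care is the identification of the permutation matrix $P(h)$ with the operator $\varphi\mapsto\varphi h^{-1}$, which is bookkeeping already prepared by the remark preceding the statement and by the multiplicativity relations $P(h^{-1})=P(h)^{-1}$, $P(h_1h_2)=P(h_1)P(h_2)$.
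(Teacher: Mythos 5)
Your proposal is correct and follows essentially the same route as the paper: both apply the Birkhoff--von Neumann decomposition of Lemma~\ref{lemPk} to the non-negative matrices $B^\oplus$ and $B^\ominus$ (whose constant row and column sums come from Property~(\ref{property4})) and then reinterpret $P(h)$ as the operator $\varphi\mapsto\varphi h^{-1}$ on the basis $\{\mathds{1}_{x_j}\}_j$, extending by linearity. Your write-up is in fact slightly more explicit than the paper's in verifying the hypotheses of Lemma~\ref{lemPk}.
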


\begin{proof}
Let us start by considering the statement concerning $c^\oplus(h)$ and $F^\oplus(h)$.
Without loss of generality, since ${F^\oplus}$ is linear, it will suffice to prove
the existence of a suitable non-negative function ${c^\oplus(h)}$, such
that  ${F^\oplus}({\mathds{1}}_{x_j})=\sum_{h\in {\mathrm{Aut}(X)}}{c^\oplus(h)}{\mathds{1}}_{x_j} {h}^{-1}$, for any $j\in \{1, \dots , n\}$.
The column coordinate vector of the function ${F^\oplus}({\mathds{1}}_{x_j})$ relative to
the basis $\{{\mathds{1}}_{x_1},\ldots,{\mathds{1}}_{x_n}\}$
is ${B^\oplus} e_j$.
Property (\ref{property4}) and Lemma~\ref{lemPk} imply that for every $h\in {\mathrm{Aut}(X)}$ a non-negative real number ${c^\oplus(h)}$
exists, such that
$${B^\oplus} e_j = \sum_{h\in {\mathrm{Aut}(X)}}{c^\oplus(h)} P(h) e_j=\sum_{h\in {\mathrm{Aut}(X)}}{c^\oplus(h)} e_{\sigma_h(j)}.$$
Since the column vector $e_{\sigma_h(j)}$ represents the column coordinate vector of the function ${\mathds{1}}_{h(x_j)}$ relative to the basis
$\{{\mathds{1}}_{x_1},\ldots,{\mathds{1}}_{x_n}\}$,
we can conclude that $${F^\oplus}({\mathds{1}}_{x_j})=\sum_{h\in {\mathrm{Aut}(X)}}{c^\oplus(h)} {\mathds{1}}_{h(x_j)}
=\sum_{h\in {\mathrm{Aut}(X)}}{c^\oplus(h)} {\mathds{1}}_{x_j}h^{-1}.$$
The proof of the statement concerning $c^\ominus$ and $F^\ominus$ is analogous.
\end{proof}

\begin{rem}
In general, the function $c \colon \mathrm{Aut}(X) \to \R$
associated with the Birkhoff–von Neumann decomposition
does not induce a permutant measure,
i.e., the function $\mu_c$ that takes each subset $H$ of $\mathrm{Aut}(X)$ to the value $\mu_c(H):=\sum_{h\in H}c(h)$ is not a permutant measure.
For example, let us consider the set $X=\{1,2,3,4\}$ and the group $S_4$ of all
permutations of $X$.
Let us define a linear GEO $F: \mathbb{R}^X \to \mathbb{R}^X$
for $(\R^X,S_4)$
by setting $F(\mathds{1}_j)=\sum_{i\in X}\mathds{1}_i$, for every index $j$. After fixing the basis $\{\mathds{1}_j\}_j$, the matrix $B$ associated with $F$ has the following form:
	$$B=\begin{pmatrix}
	1 & 1 & 1 & 1\\
	1 & 1 & 1 & 1\\
	1 & 1 & 1 & 1\\
	1 & 1 & 1 & 1\\
	\end{pmatrix}.$$
	
	As guaranteed by Lemma~\ref{lemPk}, $B$ can be decomposed as follows:
	$$B= P(\id_X) + P(\sigma) + P(\sigma^2) + P(\sigma^3),$$
	where $\sigma= \cycle{1,2,3,4} \in S_4$, in cycle notation.
	Let $\langle\sigma \rangle$ be the cyclic group generated by $\sigma$.
	{\color{black} The function $c:\mathrm{Aut}(X)\to \R$ associated with the previous decomposition of $B$ is defined as follows: $c({h})=1$ if $h \in \langle\sigma \rangle$, otherwise $c({h})=0$. Let us now consider the permutation $g=\cycle{1,2}\in S_4$, in cycle notation.
Since $\sigma^2=\cycle{1,3}\cycle{2,4}$, we have that:}
	$$c(g \sigma^2 g^{-1})= c(\cycle{1,2}\cycle{1,3}\cycle{2,4}\cycle{1,2})=c(\cycle{1,4}\cycle{2,3})=0.$$
	Since $c(\sigma^2)=1$, $c$ is not invariant under the conjugation action of $S_4$, and hence $\mu_c$ is not a permutant measure.
\end{rem}

Let us now go back to the proof of Theorem~\ref{mainthm} and consider the functions
${c^\oplus},{c^\ominus}:{\mathrm{Aut}(X)}\to \R$ introduced in Corollary~\ref{cor1}.
In order to define the permutant measure ${\mu}$ on ${\mathrm{Aut}(X)}$ we will need the next lemma.

\begin{lem}\label{lemcommute}
If $g\in G$ then ${B^\oplus}P(g)=P(g){B^\oplus}$.
\end{lem}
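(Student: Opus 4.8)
The plan is to reduce the statement to the equivariance of $F^\oplus$, which was already recorded as Property~(1) following the definition of $F^\oplus$, $F^\ominus$. The point is that $F^\oplus$ is a linear GEO for $(\R^X,G)$, so its matrix must intertwine the matrices representing the $G$-action; one only has to identify which permutation matrix represents the right action of $g$.

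Concretely, I would argue as follows. Since ${\mathds{1}}_{x_j}\,g={\mathds{1}}_{g^{-1}(x_j)}$, the linear map $\varphi\mapsto\varphi g$ on $\R^X$ is represented, with respect to the basis $\{{\mathds{1}}_{x_1},\ldots,{\mathds{1}}_{x_n}\}$, by the permutation matrix $P(g^{-1})=P(g)^{-1}$ (using $P(h)e_j=e_{\sigma_h(j)}$ and $\sigma_{g^{-1}}=\sigma_g^{-1}$). Equivariance of $F^\oplus$, i.e. $F^\oplus(\varphi g)=F^\oplus(\varphi)g$ for all $\varphi$ and all $g\in G$, then translates into the matrix identity $B^\oplus P(g)^{-1}=P(g)^{-1}B^\oplus$. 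Multiplying on the left and on the right by $P(g)$ yields $P(g)B^\oplus=B^\oplus P(g)$, which is the claim.

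Alternatively — and this is the route I would actually write out, as it avoids any discussion of which action is ``left'' and which is ``right'' — one can argue entrywise. In the proof of Lemma~\ref{lempermutation} it was shown that $b_{ij}=b_{\sigma_g(i)\sigma_g(j)}$ for all $i,j\in\{1,\ldots,n\}$ and all $g\in G$; applying $\max\{\,\cdot\,,0\}$ to both sides gives at once $b^\oplus_{ij}=b^\oplus_{\sigma_g(i)\sigma_g(j)}$ for all such $i,j,g$. Then, using $p_{ij}(g)=1$ iff $\sigma_g(j)=i$, a short computation gives $(B^\oplus P(g))_{ik}=b^\oplus_{i,\sigma_g(k)}$ and $(P(g)B^\oplus)_{ik}=b^\oplus_{\sigma_g^{-1}(i),k}$, and the symmetry relation (with $i$ replaced by $\sigma_g^{-1}(i)$) shows these two quantities coincide.

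The content of the lemma is thus essentially already contained in Property~(1): the only thing to be careful about is the bookkeeping — that passing to the positive part preserves the index symmetry coming from Lemma~\ref{lempermutation}, and that the right action of $g$ is implemented by $P(g^{-1})=P(g)^{-1}$ rather than by $P(g)$, so that commuting with one is the same as commuting with the other. There is no genuine obstacle beyond this.
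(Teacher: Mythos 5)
Your proposal is correct and matches the paper's argument: the paper likewise reduces the claim to the equivariance of $F^\oplus$, observing that the map $R_{g^{-1}}\colon\varphi\mapsto\varphi g^{-1}$ is represented by $P(g)$ in the basis $\{{\mathds{1}}_{x_1},\ldots,{\mathds{1}}_{x_n}\}$, so that $F^\oplus R_{g^{-1}}=R_{g^{-1}}F^\oplus$ gives ${B^\oplus}P(g)=P(g){B^\oplus}$ directly. Your entrywise variant via $b^\oplus_{ij}=b^\oplus_{\sigma_g(i)\sigma_g(j)}$ is just the same identity unwound at the level of matrix entries, and both of your bookkeeping computations are accurate.
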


\begin{proof}
Let us consider a permutation $g \in G$. The function $R_{g^{-1}}\colon \mathbb{R}^X\to \mathbb{R}^X$, which maps $\varphi$ to $\varphi {g^{-1}}$, is a linear application. Furthermore, $R_{g^{-1}}({\mathds{1}}_{x_j})={\mathds{1}}_{x_j} {g^{-1}}={\mathds{1}}_{g(x_j)}$
for every index $j$.
Hence, the matrix $N$ associated to $R_{g^{-1}}$ with respect to the basis
$\{{\mathds{1}}_{x_1},\ldots,{\mathds{1}}_{x_n}\}$
verifies the equality $Ne_j=e_{\sigma_{g}(j)}$, so that $N=P(g)$
(we set
$e_j:=\prescript{t}{}(0,\ldots,1,\ldots,0)\in \R^n$, where $1$ is in the $j$-th position).
Since ${F^\oplus}$ is a GEO, the equality ${F^\oplus}R_{g^{-1}}=R_{g^{-1}}{F^\oplus}$ holds.
This immediately implies that ${B^\oplus}P(g)=P(g){B^\oplus}$.
\end{proof}

An analogous lemma holds for the matrix ${B^\ominus}$.

Lemma~\ref{lemcommute} guarantees that $P(g){B^\oplus}P(g)^{-1}={B^\oplus}$ for every $g\in G$. From this equality and Lemma~\ref{lemPk} it follows that
\begin{align}\label{f2}
{B^\oplus}&=\overbrace{\frac{1}{|G|}{B^\oplus}+\ldots +\frac{1}{|G|}{B^\oplus}}^\text{$|G|$ summands}\\
&=\frac{1}{|G|}\sum_{g\in G} P(g){B^\oplus}P(g)^{-1}\nonumber\\
&= \frac{1}{|G|}\sum_{g\in G} P(g)  \left(\sum_{{h\in {\mathrm{Aut}(X)}}}{c^\oplus(h)} P(h)\right) P(g)^{-1}\nonumber\\
&= \sum_{{h\in {\mathrm{Aut}(X)}}}\sum_{g\in G} \frac{{c^\oplus(h)}}{|G|} P(g)P(h)P(g)^{-1}\nonumber\\
&= \sum_{{h\in {\mathrm{Aut}(X)}}}\frac{{c^\oplus(h)}}{|G|}\sum_{g\in G} P(g h g^{-1})\nonumber.
\end{align}
Therefore, for every index $j$ we have that
\begin{align}\label{f3}
{B^\oplus}e_j &=\sum_{{h\in {\mathrm{Aut}(X)}}}\frac{{c^\oplus(h)}}{|G|}\sum_{g\in G} P(g h g^{-1})e_j\\
&=\sum_{{h\in {\mathrm{Aut}(X)}}}\frac{{c^\oplus(h)}}{|G|}\sum_{g\in G} e_{\sigma_{g h g^{-1}}(j)}\nonumber.
\end{align}
This means that
\begin{align}\label{f4}
{F^\oplus}({\mathds{1}}_{x_j})&=\sum_{{h\in {\mathrm{Aut}(X)}}}\frac{{c^\oplus(h)}}{|G|}\sum_{g\in G} {\mathds{1}}_{g h g^{-1}(x_j)}\\
&=\sum_{{h\in {\mathrm{Aut}(X)}}}\frac{{c^\oplus(h)}}{|G|}\sum_{g\in G} {\mathds{1}}_{x_j}g h^{-1} g^{-1}\nonumber.
\end{align}
Since ${F^\oplus}$ is linear, it follows that
\begin{align}\label{f5}
{F^\oplus}(\varphi)&=\sum_{{h\in {\mathrm{Aut}(X)}}} \frac{{c^\oplus(h)}}{|G|}\sum_{g\in G} \varphi g {h}^{-1} g^{-1}
\end{align}
for every $\varphi\in\R^X$.

We observe that the permutations $g {h}^{-1} g^{-1}$ in the previous summation are not guaranteed to be different from each other, for $g$ varying in $G$ and $h$ varying in ${\mathrm{Aut}(X)}$.

For each $h\in {\mathrm{Aut}(X)}$, let us consider the orbit $\mathcal{O}(h)$ of $h$ under the conjugation action of $G$ on ${\mathrm{Aut}(X)}$,
and set
\begin{align*}
{\mu^\oplus}(h)&:=\sum_{{f}\in \mathcal{O}(h)}\frac{c^\oplus({f})}{|\mathcal{O}({f})|}=\sum_{{f}\in \mathcal{O}(h)}\frac{c^\oplus({f})}{|\mathcal{O}(h)|}\\
{\mu^\ominus}(h)&:=\sum_{{f}\in \mathcal{O}(h)}\frac{c^\ominus({f})}{|\mathcal{O}({f})|}=\sum_{{f}\in \mathcal{O}(h)}\frac{c^\ominus({f})}{|\mathcal{O}(h)|}.
\end{align*}

In other words, we define the measures $\mu^\oplus(h),\mu^\ominus(h)$ of each permutation $h$ as the averages of the functions $c^\oplus,c^\ominus$ along the orbit of $h$ under the conjugation action of $G$. Let $G_h$ be the stabilizer subgroup of $G$ with respect to $h$, i.e., the subgroup of $G$ containing the elements that fix $h$ by conjugation. We recall that by conjugating $h$ with respect to every element of $G$ we obtain each element of the orbit $\mathcal{O}(h)$ exactly $|G_h|$ times,
and the well-known relation $|G_h||\mathcal{O}(h)|=|G|$ (cf.~\cite{aschbacher_2000}).
Let us now set $\delta({f},h)=1$ if ${f}$ and $h$ belong to the same orbit under the conjugation action of $G$, and $\delta({f},h)=0$ otherwise.

We observe that the following properties hold for $f,h\in {\mathrm{Aut}(X)}$:
\begin{enumerate}
  \item $G_{h^{-1}}=G_{h}$;
  \item if ${f}\in \mathcal{O}(h)$ then $G_{{f}}$ is isomorphic to $G_{h}$;
  \item ${f}^{-1}\in \mathcal{O}(h^{-1})\iff {f}\in \mathcal{O}(h)\iff h\in \mathcal{O}({f})$.
\end{enumerate}
Therefore, equality (\ref{f5}) implies
\begin{align}\label{f6}
{F^\oplus}(\varphi )&=\sum_{{h\in {\mathrm{Aut}(X)}}}\frac{{c^\oplus(h)}}{|G|}|G_{h^{-1}}|\sum_{{f}^{-1}\in \mathcal{O}(h^{-1})}  \varphi  {{f}}^{-1}\\
&=\sum_{{h\in {\mathrm{Aut}(X)}}}\frac{{c^\oplus(h)}}{|G|}|G_{h}|\sum_{{f}\in \mathcal{O}(h)}  \varphi  {{f}}^{-1}\nonumber\\
&=\sum_{{h\in {\mathrm{Aut}(X)}}}\frac{{c^\oplus(h)}}{|G|}|G_{h}|\sum_{{f}\in {\mathrm{Aut}(X)}}  \delta({f},h) \varphi  {{f}}^{-1}\nonumber\\
&=\sum_{{{f}\in {\mathrm{Aut}(X)}}}\left(\sum_{h\in {\mathrm{Aut}(X)}}\frac{{c^\oplus(h)}}{|G|}|G_{h}|  \delta({f},h)\right) \varphi  {{f}}^{-1}\nonumber\\
&=\sum_{{{f}\in {\mathrm{Aut}(X)}}}\left(\sum_{h\in {\mathrm{Aut}(X)}}\frac{{c^\oplus(h)}}{|\mathcal{O}(h)|}  \delta({f},h)\right) \varphi  {{f}}^{-1}\nonumber\\
&=\sum_{{{f}\in {\mathrm{Aut}(X)}}}\left(\sum_{h\in \mathcal{O}({f})}\frac{{c^\oplus(h)}}{|\mathcal{O}(h)|} \right) \varphi  {{f}}^{-1}\nonumber\\
&=\sum_{{{f}\in {\mathrm{Aut}(X)}}}\varphi  {{f}}^{-1}\ {\mu^\oplus}({f}). \nonumber
\end{align}

The definition of ${\mu^\oplus}$ immediately implies that ${\mu^\oplus}(H)={\mu^\oplus}(g  H  g^{-1})$ for every $g\in G$ and every subset $H$ of ${\mathrm{Aut}(X)}$.
In other words, ${\mu^\oplus}$ is a non-negative permutant measure with respect to $G$.
Quite analogously, we can prove the equality ${F^\ominus}(\varphi )=\sum_{{{f}\in {\mathrm{Aut}(X)}}}\varphi  {{f}}^{-1}\ {\mu^\ominus}({f})$, and that ${\mu^\ominus}$ is a non-negative permutant measure with respect to $G$.
As a result, the function $\mu:={\mu^\oplus}-{\mu^\ominus}$ is a permutant measure and the equality ${F}(\varphi )=\sum_{{{f}\in {\mathrm{Aut}(X)}}}\varphi  {{f}}^{-1}\ {\mu}({f})$ holds, since $F={F^\oplus}-{F^\ominus}$.

It remains to prove that $\sum_{h\in\mathrm{Aut}(X)}{|\mu(h)|}
=
\max_{\varphi\in\R^X\setminus\{\mathbf{0}\}}\frac{\|F(\varphi)\|_\infty}{\|\varphi\|_\infty}$.

This statement is trivial if $F\equiv \mathbf{0}$,
since in this case $\mu$ is the null measure. Hence
we can assume that $F$ is not the null map and $B$ is not the null matrix.
In order to proceed, we need the next statement.

%
%
%

\begin{prop}\label{propc}
If $f_1,f_2\in\mathrm{Aut}(X)$ and an index $s\in \{1,\ldots,n\}$ exists, such that $f_1(x_s)=f_2(x_s)$ (i.e., $\sigma_{f_1}(s)=\sigma_{f_2}(s)$), then
either $c^\oplus(f_1)=0$, or $c^\ominus(f_2)=0$, or both.
\end{prop}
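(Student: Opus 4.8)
The plan is to exploit the fact that the non-negative matrices $B^\oplus$ and $B^\ominus$ have ``disjoint'' supports at the level of individual entries, and to read off lower bounds for one well-chosen entry from the Birkhoff–von Neumann decompositions produced in Corollary~\ref{cor1}.

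First I would record the elementary observation that for every pair of indices $(i,j)$ one has $b^\oplus_{ij}\cdot b^\ominus_{ij}=0$, since $b^\oplus_{ij}=\max\{b_{ij},0\}$ and $b^\ominus_{ij}=\max\{-b_{ij},0\}$ cannot both be strictly positive. Next I would translate the hypothesis: setting $i:=\sigma_{f_1}(s)=\sigma_{f_2}(s)$, the defining property of permutation matrices ($p_{kj}(h)=1$ if and only if $\sigma_h(j)=k$) shows that both $P(f_1)$ and $P(f_2)$ carry a $1$ in position $(i,s)$.

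Then I would plug the index $s$ into the two decompositions. From $B^\oplus=\sum_{h\in\mathrm{Aut}(X)}c^\oplus(h)P(h)$ with $c^\oplus\ge 0$ (non-negativity guaranteed by Lemma~\ref{lemPk} via Property~(\ref{property4})), the $(i,s)$ entry satisfies
$$b^\oplus_{is}=\sum_{h\in\mathrm{Aut}(X)}c^\oplus(h)\,p_{is}(h)=\sum_{h:\,\sigma_h(s)=i}c^\oplus(h)\ \ge\ c^\oplus(f_1)\ \ge\ 0,$$
because $f_1$ is one of the summands and all terms are non-negative; symmetrically $b^\ominus_{is}\ge c^\ominus(f_2)\ge 0$. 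Multiplying these two chains of inequalities and using the disjointness observation gives $0\le c^\oplus(f_1)\,c^\ominus(f_2)\le b^\oplus_{is}\,b^\ominus_{is}=0$, whence $c^\oplus(f_1)\,c^\ominus(f_2)=0$, which is exactly the claim.

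I do not anticipate a genuine obstacle here: the whole argument collapses to a single inequality once the bookkeeping is set up. The only points requiring care are the index convention for permutation matrices (whether $p_{ij}(h)=1$ encodes $h(x_j)=x_i$), and making sure that the non-negativity of $c^\oplus$ and $c^\ominus$ is really what licenses dropping all but one term in the sum; both are already in place in the material preceding this proposition.
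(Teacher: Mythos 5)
Your proof is correct and follows essentially the same route as the paper's: both extract the lower bounds $b^\oplus_{\sigma_{f_1}(s)s}\ge c^\oplus(f_1)$ and $b^\ominus_{\sigma_{f_2}(s)s}\ge c^\ominus(f_2)$ from the Birkhoff--von Neumann decompositions and conclude via the entrywise disjointness $b^\oplus_{ij}\,b^\ominus_{ij}=0$. The only cosmetic difference is that you read the bounds directly off the matrix identity while the paper evaluates $F^\oplus(\mathds{1}_{x_s})$ at the point $f_1(x_s)$, and you finish by multiplying inequalities where the paper uses an implication chain.
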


\begin{proof}
By applying the equality
$F^\oplus(\varphi)=\sum_{h\in {\mathrm{Aut}(X)}}c^\oplus(h)\varphi h^{-1}$ for $\varphi=\mathds{1}_{x_1}$,
we obtain that
\begin{align}\label{fc}
b^\oplus_{\sigma_{f_1}(s)s}&= \left(\sum_{i=1}^{n}b^\oplus_{is}{\mathds{1}}_{x_i}\right)(x_{\sigma_{f_1}(s)})\\
&= {F^\oplus}(\mathds{1}_{x_s})(x_{\sigma_{f_1}(s)})\nonumber\\
&= {F^\oplus}(\mathds{1}_{x_s})({f_1}(x_s))\nonumber\\
&=\sum_{{h\in {\mathrm{Aut}(X)}}}{c^\oplus(h)} \mathds{1}_{x_s}  {{h}}^{-1}({f_1}(x_s))\nonumber\\
&\ge {c^\oplus({f_1})} \mathds{1}_{x_s}  {{{f_1}}}^{-1}({f_1}(x_s))\nonumber\\
&= {c^\oplus({f_1})} \mathds{1}_{x_s} (x_s)\nonumber\\
&=c^\oplus({f_1}). \nonumber
\end{align}
Analogously, the inequality $b^\ominus_{\sigma_{f_2}(s)s} \ge c^\ominus({f_2})$ holds.
Therefore,

$c^\oplus({f_1})>0 \implies b^\oplus_{\sigma_{f_1}(s)s}>0 \implies b^\ominus_{\sigma_{f_1}(s)s}=0
\implies b^\ominus_{\sigma_{f_2}(s)s}=0
\implies c^\ominus({f_2})=0$.

It follows that
either $c^\oplus(f_1)=0$, or $c^\ominus(f_2)=0$, or both.
\end{proof}

\begin{cor}\label{corc}
For every $f\in\mathrm{Aut}(X)$ either $c^\oplus(f)=0$, or $c^\ominus(f)=0$, or both.
\end{cor}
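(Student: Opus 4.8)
The plan is immediate: Corollary~\ref{corc} is simply the diagonal case $f_1=f_2$ of Proposition~\ref{propc}. Fix an arbitrary $f\in\mathrm{Aut}(X)$ and apply Proposition~\ref{propc} with $f_1=f_2=f$. Then for the index $s=1$ (indeed for any index $s\in\{1,\ldots,n\}$) we trivially have $f_1(x_s)=f(x_s)=f_2(x_s)$, equivalently $\sigma_{f_1}(s)=\sigma_{f_2}(s)$, so the hypothesis of Proposition~\ref{propc} is met. Its conclusion then reads: either $c^\oplus(f)=0$, or $c^\ominus(f)=0$, or both, which is exactly the assertion of the corollary. There is essentially no obstacle here, since all the work was already done in the proof of Proposition~\ref{propc}: with $f_1=f_2=f$ the two row indices $\sigma_{f_1}(s)$ and $\sigma_{f_2}(s)$ coincide automatically, and the chain of implications $c^\oplus(f)>0 \implies b^\oplus_{\sigma_f(s)s}>0 \implies b^\ominus_{\sigma_f(s)s}=0 \implies c^\ominus(f)=0$ established there specializes directly.

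If one prefers a self-contained argument, it can be phrased as follows. From the definitions $b^\oplus_{ij}=\max\{b_{ij},0\}$ and $b^\ominus_{ij}=\max\{-b_{ij},0\}$ one has $b^\oplus_{ij}\,b^\ominus_{ij}=0$ for all $i,j$, so at most one of these two quantities is strictly positive. On the other hand, the computation in the proof of Proposition~\ref{propc} (taken with $s=1$) gives the inequalities $b^\oplus_{\sigma_f(1)\,1}\ge c^\oplus(f)$ and $b^\ominus_{\sigma_f(1)\,1}\ge c^\ominus(f)$, both $c^\oplus(f)$ and $c^\ominus(f)$ being non-negative. If $c^\oplus(f)>0$ then $b^\oplus_{\sigma_f(1)\,1}>0$, hence $b^\ominus_{\sigma_f(1)\,1}=0$, hence $c^\ominus(f)=0$; symmetrically $c^\ominus(f)>0$ forces $c^\oplus(f)=0$. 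In all cases at least one of $c^\oplus(f)$, $c^\ominus(f)$ vanishes, proving the corollary.
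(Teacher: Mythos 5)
Your proposal is correct and matches the paper exactly: the paper's entire proof is ``Set $f_1=f_2$ in Proposition~\ref{propc}.'' Your additional self-contained unwinding of the inequalities $b^\oplus_{\sigma_f(1)\,1}\ge c^\oplus(f)$ and $b^\ominus_{\sigma_f(1)\,1}\ge c^\ominus(f)$ is a faithful specialization of the same argument, not a different route.
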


\begin{proof}
Set $f_1=f_2$ in Proposition~\ref{propc}.
\end{proof}


Let us now set $c:=c^\oplus-c^\ominus$. Corollary~\ref{corc} implies that $|c(h)|=c^\oplus(h)+c^\ominus(h)$ for every $h\in\mathrm{Aut}(X)$.
The definitions of $\mu^\oplus$ and $\mu^\ominus$ immediately imply that
$\sum_{{f}\in \mathcal{O}(h)}{\mu^\oplus}(f)=\sum_{{f}\in \mathcal{O}(h)}c^\oplus({f})$ and
$\sum_{{f}\in \mathcal{O}(h)}{\mu^\ominus}(f)=\sum_{{f}\in \mathcal{O}(h)}c^\ominus({f})$
for each $h\in\mathrm{Aut}(X)$.
It follows that
$\sum_{{f}\in \mathcal{O}(h)}|{\mu}(f)|\le
\sum_{{f}\in \mathcal{O}(h)}{\mu^\oplus}(f)+\sum_{{f}\in \mathcal{O}(h)}{\mu^\ominus}(f)=
\sum_{{f}\in \mathcal{O}(h)}{c^\oplus}(f)+\sum_{{f}\in \mathcal{O}(h)}{c^\ominus}(f)=
\sum_{{f}\in \mathcal{O}(h)}|c({f})|$
for each $h\in\mathrm{Aut}(X)$, and hence $$\sum_{h\in\mathrm{Aut}(X)}|{\mu}(h)|\le\sum_{h\in\mathrm{Aut}(X)}|c({h})|.$$
By setting ${\mathds{1}}_{X}:=\sum_{j=1}^{n}{\mathds{1}}_{x_j}$ and
recalling Corollary~\ref{cor1}, we obtain $F^\oplus({\mathds{1}}_{X})=\left(\sum_{h\in {\mathrm{Aut}(X)}}{c^\oplus(h)}\right){\mathds{1}}_{X}$
and $F^\ominus({\mathds{1}}_{X})=\left(\sum_{h\in {\mathrm{Aut}(X)}}{c^\ominus(h)}\right){\mathds{1}}_{X}$.
Since any line in $B$ is a permutation of the first row of $B$, we get $F^\oplus({\mathds{1}}_{X})=\left(\sum_{j=1}^{n} b_{1j}^\oplus\right){\mathds{1}}_{X}$
and $F^\ominus({\mathds{1}}_{X})=\left(\sum_{j=1}^{n} b_{1j}^\ominus\right){\mathds{1}}_{X}$. As a consequence, the equalities
$\sum_{h\in\mathrm{Aut}(X)} c^\oplus({h})=\sum_{j=1}^{n} b_{1j}^\oplus$ and
$\sum_{h\in\mathrm{Aut}(X)} c^\ominus({h})=\sum_{j=1}^{n} b_{1j}^\ominus$ hold, and therefore
$\sum_{h\in\mathrm{Aut}(X)} |c({h})|=\sum_{h\in\mathrm{Aut}(X)} c^\oplus({h})+\sum_{h\in\mathrm{Aut}(X)} c^\ominus({h})=\sum_{j=1}^{n} b_{1j}^\oplus+\sum_{j=1}^{n} b_{1j}^\ominus=\sum_{j=1}^{n} |b_{1j}|$.

Let us now consider the function $\bar\varphi:=\sum_{j=1}^{n}\sgn(b_{1j}){\mathds{1}}_{x_j}\in\R^X\setminus\{\mathbf{0}\}$.
By recalling that any line in $B$ is a permutation of the first row of $B$, we have that
$\sum_{j=1}^{n}|b_{1j}|=|\sum_{j=1}^{n}b_{1j}\sgn\left(b_{1j}\right)|\ge |\sum_{j=1}^{n}b_{ij}\sgn\left(b_{1j}\right)|$ for every index $i$.
It follows that
$$\|F(\bar\varphi)\|_\infty=\left\|
\sum_{i=1}^{n}\left(\sum_{j=1}^{n}b_{ij}\sgn\left(b_{1j}\right)\right){\mathds{1}}_{x_i}
\right\|_\infty=\sum_{j=1}^{n}|b_{1j}|
= \sum_{h\in\mathrm{Aut}(X)}|c({h})|\ge
\sum_{h\in\mathrm{Aut}(X)}|{\mu}(h)|.$$

Since $\|\bar\varphi\|_\infty=1$,
$\frac{\|F(\bar\varphi)\|_\infty}{\|\bar\varphi\|_\infty}
\ge
\sum_{h\in\mathrm{Aut}(X)}|{\mu}(h)|$.

For every function $\varphi\in\R^X$ we have that
$F(\varphi)=F_\mu(\varphi):=\sum_{h\in {\mathrm{Aut}(X)}}\varphi h^{-1}\ {\mu}(h)$.
Hence,
$\|F(\varphi)\|_\infty\le\sum_{h\in {\mathrm{Aut}(X)}}\|\varphi h^{-1}\|_\infty\ |{\mu}(h)|
=\|\varphi\|_\infty\sum_{h\in {\mathrm{Aut}(X)}} |{\mu}(h)|$.
Therefore,
$\frac{\|F(\varphi)\|_\infty}{\|\varphi\|_\infty}\le\sum_{h\in\mathrm{Aut}(X)}{|\mu(h)|}$ for every $\varphi\in\R^X\setminus\{\mathbf{0}\}$.

In conclusion,
$\sum_{h\in\mathrm{Aut}(X)}{|\mu(h)|}=\max_{\varphi\in\R^X\setminus\{\mathbf{0}\}}\frac{\|F(\varphi)\|_\infty}{\|\varphi\|_\infty}$.
\qed


\begin{ex}\label{exsimplestcase}
The simplest non-trivial example concerning the statement of Theorem~\ref{mainthm} can be described as follows. Let $X=\{1,2\}$ and
$G={\mathrm{Aut}(X)}=\{\id_X,\cycle{1,2}\}$. Let us consider the linear GEO $F:\R^X\to\R^X$ defined by setting
$F({\mathds{1}}_{1}):={\mathds{1}}_{1} - {\mathds{1}}_{2}$ and $F({\mathds{1}}_{2}):={\mathds{1}}_{2}-{\mathds{1}}_{1}$.
By defining
$\mu(\id_X):=1$ and $\mu(\cycle{1,2}):=-1$, we get that $\mu$ is a permutant measure with respect to $G$ and ${F}(\varphi )=\sum_{{{h}\in {\mathrm{Aut}(X)}}}\varphi  {{h}}^{-1}\ {\mu}({h})$ for every $\varphi\in \R^X$.
Furthermore, $\sum_{h\in\mathrm{Aut}(X)}{|\mu(h)|}=2=\frac{\|F({\mathds{1}}_{1}-{\mathds{1}}_{2})\|_\infty}{\|{\mathds{1}}_{1}-{\mathds{1}}_{2}\|_\infty}
=\max_{\varphi\in\R^X\setminus\{\mathbf{0}\}}\frac{\|F(\varphi)\|_\infty}{\|\varphi\|_\infty}$.
\end{ex}

We now observe that the assumption that $G$ transitively acts on $X$ cannot be removed from Theorem~\ref{mainthm}.

\begin{ex}\label{ex1}
	Let us consider the set $X= \{1, 2\}$ and the group $G=\{\id_X\}\subseteq \mathrm{Aut}(X)=\{\id_X, \cycle{1,2}\}$.
Take the operator $F\colon {\R^X}\to{\R^X}$ defined by setting $F(\mathds{1}_i)=\mathds{1}_1$ for any $i \in X$.
Although
$F$ is a linear GEO, there does not exist a permutant measure $\mu$ on $\text{Aut}(X)$,
such that $F(\varphi)=F_\mu(\varphi):=\sum_{h\in {\mathrm{Aut}(X)}}\varphi h^{-1}\ {\mu}(h)$ for every $\varphi\in {\R^X}$.

By contradiction, let us assume that such a permutant measure $\mu$ exists.
Then,
$$ \mathds{1}_{1}=F(\mathds{1}_{1})= \mathds{1}_{1}\id_X\mu(\id_X) + \mathds{1}_1 \cycle{1,2}\mu(\cycle{1,2}) =\mathds{1}_{1}\mu(\id_X) +  \mathds{1}_2\mu(\cycle{1,2}).$$
	Since $\{\mathds{1}_1, \mathds{1}_2\}$
is a basis for $\R^X$,
the equalities $\mu(\id_X)=1$ and $\mu(\cycle{1,2})=0$ must hold.

It follows that
$$F(\mathds{1}_2)= \mathds{1}_{2}\id_X\mu(\id_X) + \mathds{2}\mathds{1}_1 \cycle{1,2}\mu(\cycle{1,2})  =\mathds{1}_{2}.$$
This contradicts the assumption that $F(\mathds{1}_2)=\mathds{1}_1$.
\end{ex}

\begin{ex}\label{exS4}
Let us set $X= \{1, 2, 3, 4\}$ and $G={\mathrm{Aut}(X)}$. Let $F$ be a linear GEO with respect to $G$.
Let $B=(b_{ij})$ be the matrix associated with $F$ with respect to the basis
$\{{\mathds{1}}_{x_1},\ldots,{\mathds{1}}_{x_n}\}$. In the proof of Lemma~\ref{lempermutation} we have seen that $b_{ij}=b_{\sigma_g(i)\sigma_g(j)}$ for any $g \in G$.
It follows that two values $\alpha,\beta\in\R$ exist, such that $b_{ij}=\alpha$ if $i=j$ and $b_{ij}=\beta$ if $i\neq j$.
By using the cycle notation, let us set $\sigma= \cycle{1,2,3,4} \in {\mathrm{Aut}(X)}$
and
$\langle\sigma \rangle=\{\id_X,\sigma,\sigma^2=\cycle{1,3}\cycle{2,4},\sigma^3=\cycle{1,4,3,2}\}$, i.e., the cyclic group generated by $\sigma$.
We have that
$B=\alpha P(\id_X)+\beta P(\sigma)+\beta P(\sigma^2)+\beta P(\sigma^3)$. Therefore, by setting
$c(\id_X):=\alpha$, $c(\sigma):=c(\sigma^2):=c(\sigma^3):=\beta$, and $c(h):=0$ for every $h\not\in \langle\sigma \rangle$, we get
$F(\varphi)=\sum_{h\in {\mathrm{Aut}(X)}} \varphi h^{-1}\ c(h)$.

However, the signed measure $c$ is not a permutant measure, since the orbits under the conjugation action of $G$ are the sets
$$\begin{array}{lcl}
\mathcal{O}(\id_X) &=& \{\id_X\} \\
\mathcal{O}({\sigma}) &=& \{\sigma= \cycle{1,2,3,4},\cycle{1,2,4,3},\cycle{1,3,2,4},\cycle{1,3,4,2},\cycle{1,4,2,3},\sigma^3=\cycle{1,4,3,2}\} \\
\mathcal{O}({\sigma^2}) &=& \{\cycle{1,2}\cycle{3,4},\sigma^2=\cycle{1,3}\cycle{2,4},\cycle{1,4}\cycle{2,3}\} \\
\mathcal{O}({\cycle{1,2}}) &=& \{\cycle{1,2},\cycle{1,3},\cycle{1,4},\cycle{2,3},\cycle{2,4},\cycle{3,4}\} \\
\mathcal{O}({\cycle{1,2,3}}) &=& \{\cycle{1,2,3},\cycle{1,2,4},\cycle{1,3,2},\cycle{1,3,4},\cycle{1,4,2},\cycle{1,4,3},\cycle{2,3,4},\cycle{2,4,3}\}
\end{array}$$
and, according to our definition, $c$ is not constant on the orbits $\mathcal{O}({\sigma})$ and $\mathcal{O}({\sigma^2})$.


Following the proof of Theorem~\ref{mainthm}, we can get a permutant measure $\mu$
by computing an average on the orbits. In other words, we can set
$$
\mu(h):=
\begin{cases}
c(\id_X)=\alpha, &\text{if } h=\id_X\\
\sum_{h\in \mathcal{O}(\sigma)} \frac{c(h)}{|O(\sigma)|}=\sum_{h\in \mathcal{O}(\sigma^2)} \frac{c(h)}{|O(\sigma^2)|}=\frac{\beta}{3}, &\text{if } h\in \mathcal{O}({\sigma})\cup \mathcal{O}({\sigma^2})\\
0, &\text{otherwise.}
\end{cases}
$$
By making this choice, the equality $F(\varphi)=
\sum_{g\in {\mathrm{Aut}(X)}}\varphi  h^{-1}\  \mu(h)$ holds for every $\varphi\in \R^X$, i.e., $F$ is the linear GEO associated with the permutant measure $\mu$.
\end{ex}

Proposition~\ref{mainprop} and Theorem~\ref{mainthm} immediately imply the following statement.

\begin{thm}\label{mainresult}
Assume that $G\subseteq \mathrm{Aut}(X)$ transitively acts on the finite set $X$ and $F$ is a map from $\R^X$ to $\R^X$. The map $F$ is a linear group equivariant operator for $(\R^X,G)$ if and only if a permutant measure ${\mu}$ exists such that
$F(\varphi)=\sum_{h\in {\mathrm{Aut}(X)}}\varphi h^{-1}\ {\mu}(h)$ for every $\varphi\in {\R^X}$.
\end{thm}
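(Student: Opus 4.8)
The plan is to derive Theorem~\ref{mainresult} directly from the two results it cites, since it is explicitly stated to be an immediate consequence of Proposition~\ref{mainprop} and Theorem~\ref{mainthm}. The statement is a biconditional, so I would split the proof into the two implications and handle each by invoking exactly one of these results.

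First I would prove the ``if'' direction: assume that a permutant measure $\mu$ exists with $F(\varphi)=\sum_{h\in\mathrm{Aut}(X)}\varphi h^{-1}\,\mu(h)$ for every $\varphi\in\R^X$. Then $F=F_\mu$, and Proposition~\ref{mainprop} tells us directly that $F_\mu$ is a linear GEO for $(\R^X,G)$. Note that this direction does not even use the transitivity hypothesis, so I would mention that it holds in general. Conversely, for the ``only if'' direction, assume $F$ is a linear group equivariant operator for $(\R^X,G)$. Here the transitivity assumption on the action of $G$ on $X$ is exactly the hypothesis of Theorem~\ref{mainthm}, which then provides a permutant measure $\mu$ with $F(\varphi)=\sum_{h\in\mathrm{Aut}(X)}\varphi h^{-1}\,\mu(h)$ for every $\varphi\in\R^X$ (Theorem~\ref{mainthm} even gives the sharper norm identity $\sum_{h\in\mathrm{Aut}(X)}|\mu(h)|=\max_{\varphi\ne\mathbf{0}}\|F(\varphi)\|_\infty/\|\varphi\|_\infty$, which I would recall in passing but which is not needed for the biconditional itself).

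Since both directions are one-line applications of already-established results, there is essentially no obstacle here: the real work has already been done in Lemmas~\ref{lempermutation}--\ref{lemcommute}, Corollary~\ref{cor1}, Proposition~\ref{propc} and the averaging-over-orbits construction used to prove Theorem~\ref{mainthm}. The only thing to be careful about is making explicit that the ``if'' direction is the content of Proposition~\ref{mainprop} and the ``only if'' direction is the content of Theorem~\ref{mainthm}, and that the transitivity hypothesis is used only in the latter. I would therefore write a short proof of the form: ``If such a $\mu$ exists then $F=F_\mu$ is a linear GEO by Proposition~\ref{mainprop}. Conversely, if $F$ is a linear GEO then Theorem~\ref{mainthm} (whose hypothesis is the transitivity of the action) yields such a $\mu$. $\qed$''

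Expressed in LaTeX, the proof I would insert is:
\begin{proof}
If a permutant measure $\mu$ with respect to $G$ exists such that $F(\varphi)=\sum_{h\in\mathrm{Aut}(X)}\varphi h^{-1}\,\mu(h)$ for every $\varphi\in\R^X$, then $F=F_\mu$, and hence $F$ is a linear GEO for $(\R^X,G)$ by Proposition~\ref{mainprop}. Note that this implication does not require the transitivity of the action of $G$ on $X$. Conversely, assume that $F\colon\R^X\to\R^X$ is a linear group equivariant operator for $(\R^X,G)$. Since $G$ transitively acts on the finite set $X$, Theorem~\ref{mainthm} applies and guarantees the existence of a permutant measure $\mu$ with respect to $G$ such that $F(\varphi)=\sum_{h\in\mathrm{Aut}(X)}\varphi h^{-1}\,\mu(h)$ for every $\varphi\in\R^X$ (and moreover $\sum_{h\in\mathrm{Aut}(X)}|\mu(h)|=\max_{\varphi\in\R^X\setminus\{\mathbf{0}\}}\frac{\|F(\varphi)\|_\infty}{\|\varphi\|_\infty}$). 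This proves the statement.
\end{proof}
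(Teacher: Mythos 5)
Your proposal is correct and matches the paper exactly: the paper gives no separate proof of Theorem~\ref{mainresult}, merely noting that it is immediately implied by Proposition~\ref{mainprop} (the ``if'' direction) and Theorem~\ref{mainthm} (the ``only if'' direction), which is precisely the two-implication split you write out. Your additional remark that transitivity is needed only for the ``only if'' direction is accurate and consistent with the paper's Example~\ref{ex1}.
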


\section{Representation of linear GENEOs via permutant measures}\label{main_result2}

Our main result about the representation of linear GEOs can be adapted to GENEOs.

\begin{thm}\label{mainthm2}
Assume that $G\subseteq \mathrm{Aut}(X)$ transitively acts on the finite set $X$ and $F$ is a map from $\R^X$ to $\R^X$. The map $F$ is a linear group equivariant non-expansive operator for $(\R^X,G)$ if and only if a permutant measure ${\mu}$ exists such that
$F(\varphi)=\sum_{h\in {\mathrm{Aut}(X)}}\varphi h^{-1}\ {\mu}(h)$ for every $\varphi\in {\R^X}$,
and $\sum_{h\in {\mathrm{Aut}(X)}}|\mu(h)|\le 1$.
\end{thm}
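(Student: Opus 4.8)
The plan is to obtain Theorem~\ref{mainthm2} as a straightforward corollary of Theorem~\ref{mainthm} and Proposition~\ref{mainprop}, the only genuinely new ingredient being the observation that, \emph{for a linear operator}, non-expansiveness is equivalent to having operator norm at most $1$. Concretely, if $F\colon\R^X\to\R^X$ is linear, then $\|F(\varphi_1)-F(\varphi_2)\|_\infty=\|F(\varphi_1-\varphi_2)\|_\infty$ for all $\varphi_1,\varphi_2\in\R^X$, so $F$ is non-expansive if and only if $\|F(\varphi)\|_\infty\le\|\varphi\|_\infty$ for every $\varphi\in\R^X$, i.e.\ if and only if $\max_{\varphi\in\R^X\setminus\{\mathbf{0}\}}\frac{\|F(\varphi)\|_\infty}{\|\varphi\|_\infty}\le 1$.

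For the ``only if'' implication I would argue as follows. If $F$ is a linear GENEO, it is in particular a linear GEO, so Theorem~\ref{mainthm} yields a permutant measure $\mu$ with $F(\varphi)=\sum_{h\in\mathrm{Aut}(X)}\varphi h^{-1}\,\mu(h)$ for every $\varphi\in\R^X$ and $\sum_{h\in\mathrm{Aut}(X)}|\mu(h)|=\max_{\varphi\in\R^X\setminus\{\mathbf{0}\}}\frac{\|F(\varphi)\|_\infty}{\|\varphi\|_\infty}$. By the equivalence recalled above the right-hand side does not exceed $1$, whence $\sum_{h\in\mathrm{Aut}(X)}|\mu(h)|\le 1$.

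For the ``if'' implication, suppose $\mu$ is a permutant measure with respect to $G$ satisfying $\sum_{h\in\mathrm{Aut}(X)}|\mu(h)|\le 1$. Proposition~\ref{mainprop} already tells us that $F_\mu$ is a linear GEO, so it only remains to check non-expansiveness. Since every $h\in\mathrm{Aut}(X)$ permutes $X$, right composition by $h^{-1}$ preserves the $L^\infty$-norm, i.e.\ $\|\varphi h^{-1}\|_\infty=\|\varphi\|_\infty$; hence the triangle inequality gives $\|F_\mu(\varphi)\|_\infty\le\sum_{h\in\mathrm{Aut}(X)}\|\varphi h^{-1}\|_\infty\,|\mu(h)|=\|\varphi\|_\infty\sum_{h\in\mathrm{Aut}(X)}|\mu(h)|\le\|\varphi\|_\infty$, and by linearity this is exactly non-expansiveness. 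Thus $F_\mu$ is a linear GENEO, and the theorem is proved. I do not expect any real obstacle here: the entire analytic weight of the result — the Birkhoff–von Neumann decomposition and the orbit-averaging construction of $\mu$ — is already carried by Theorem~\ref{mainthm}, and Theorem~\ref{mainthm2} is essentially bookkeeping; the single point deserving care is the linearity-dependent equivalence between non-expansiveness and the norm bound $\sum_{h\in\mathrm{Aut}(X)}|\mu(h)|\le 1$.
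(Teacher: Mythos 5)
Your proposal is correct and follows essentially the same route as the paper: the forward direction invokes Theorem~\ref{mainthm} and reads off $\sum_{h\in\mathrm{Aut}(X)}|\mu(h)|\le 1$ from the norm identity, and the converse combines Proposition~\ref{mainprop} with the triangle inequality and $\|\varphi h^{-1}\|_\infty=\|\varphi\|_\infty$. Your explicit remark that, for linear operators, non-expansiveness is equivalent to the operator-norm bound is a welcome clarification of a step the paper leaves implicit, but it does not change the argument.
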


\begin{proof}\label{proofmainthm2}
If $F$ is a linear group equivariant non-expansive operator for $(\R^X,G)$,
then Theorem~\ref{mainthm} guarantees that in $\PMe(G)$ a permutant measure ${\mu}$ exists, such that
$F(\varphi)=\sum_{h\in {\mathrm{Aut}(X)}}\varphi h^{-1}\ {\mu}(h)$ for every $\varphi\in {\R^X}$, and
$\sum_{h\in\mathrm{Aut}(X)}{|\mu(h)|}=\max_{\varphi\in\R^X\setminus\{\mathbf{0}\}}\frac{\|F(\varphi)\|_\infty}{\|\varphi\|_\infty}$.
Since $F$ is non-expansive, the inequality $\sum_{h\in {\mathrm{Aut}(X)}}|\mu(h)|\le 1$ follows.
This proves the first implication in our statement.

Let us now assume that a permutant measure ${\mu}$ exists such that
$$F(\varphi)=\sum_{h\in {\mathrm{Aut}(X)}}\varphi h^{-1}\ {\mu}(h)$$ for every $\varphi\in {\R^X}$,
with $\sum_{h\in {\mathrm{Aut}(X)}}|\mu(h)|\le 1$.
Then Proposition~\ref{mainprop} states that $F$ is a linear group equivariant operator for $(\R^X,G)$.
Moreover,
\begin{align}\label{fft}
\|F(\varphi)\|_\infty &=\left\|\sum_{h\in {\mathrm{Aut}(X)}}\varphi h^{-1}\ {\mu}(h)\right\|_\infty\nonumber\\
&\le\sum_{h\in {\mathrm{Aut}(X)}}\left\|\varphi h^{-1}\right\|_\infty|{\mu}(h)|\nonumber\\
&=\sum_{h\in {\mathrm{Aut}(X)}}\left\|\varphi \right\|_\infty|{\mu}(h)|\nonumber\\
&=\|\varphi\|_\infty\left(\sum_{h\in {\mathrm{Aut}(X)}} \ |{\mu}(h)|\right)\nonumber\\
&\le\|\varphi\|_\infty.\nonumber
\end{align}
This proves that $F$ is non-expansive, and concludes the proof of the second implication in our statement.
\end{proof}


\section{How GENEOs based on permutant measures could be used for transforming data}\label{example}

In this section we will illustrate an example of a possible application of GENEOs obtained via permutant measures. The framework is the one described in Example~\ref{exS2}, with some extensions. We consider a subset $X \subseteq \R^3$ made of points with integer coordinates, belonging to a cubic lattice and discretizing a cube $C$.
Formally, $X$ can be expressed as the product $X = \{1,\dots,n\}^3$. We consider again the group $G$ of orientation-preserving isometries that map $X$ into $X$.  As already stated in Example~\ref{exS2}, let $\pi_1,\pi_2,\pi_3$ be the three planes that contain the center of mass of $C$ and are parallel to a face of the cube $C$. Let $h_i:X\to X$ be the orthogonal symmetry with respect to $\pi_i$, for $i\in \{1,2,3\}$. Then $H_1 = \{h_1, h_2, h_3\}$ is a permutant for $G$. Let us now introduce two new permutants. First, consider the six planes $\lambda_1, \dots ,\lambda_6$ each one containing a couple of edges of $C$ that are symmetric with respect to the center of mass of $C$. Moreover, let $\ell_i:X\to X$ be the orthogonal simmetry with respect to $\lambda_i$ for $i \in \{1,2,3,4,5,6\}$. It is easy to check that $H_2 = \{\ell_1,\dots,\ell_6\}$ is a permutant for $G$. Lastly, it is trivial to verify that $H_3 = \{s\}$, where $s$ denotes the central symmetry with respect to the center of mass of the cube, is another permutant for $G$.

Given the permutants $H_1, H_2$ and $H_3$, three permutant measures $\mu_{H_1}, \mu_{H_2}, \mu_{H_3}$ on $\mathrm{Aut}(X)$ can be defined in the following way:
\[
	\mu_{H_i}(h) =
	\begin{cases}
		\frac{1}{|H_i|} & \text{if}\,\, h \in H_i,\\
		0 & \text{otherwise.}
	\end{cases}
	\quad \text{for}\,\, i \in \{1,2,3\}.
\]
By definition, we have that $\sum_{h \in \mathrm{Aut}(X)} \mu_{H_i}(h) = \sum_{h \in H_i} \mu_{H_i}(h) = 1$ for $i \in \{1,2,3\}$. Hence, in force of Theorem~\ref{mainthm2} we know that the operators defined as
\[
F_i(\varphi) = \frac{1}{|H_i|}\sum_{h\in H_i}\varphi h^{-1},\  \quad \text{for}\,\, i \in \{1,2,3\},
\]
are linear GENEOs with respect to $G$.

Now, let us introduce a classification problem that we will tackle with the help of these GENEOs. We consider functions on X that can be interpreted as 3d scans of playing dice. We denote each point $x$ of $X$ with its indexes in the grid: $x = (i,j,k)$. {All the functions $\varphi$ that we will take into account, are such that $\varphi(i,j,k) = 0$ if none of the indexes belong to the set $\{1,n\}$. This means that the functions can be non-zero only on the outer surface of the cube, as we are interested only in the visible part of the die. We observe that the number of points of $X$ that belong to the outer surface of the cube $C$ is $n^3-(n-2)^3$.
Furthermore, we model the dots on the faces of a die as two-dimensional Gaussian spots: in particular, if we consider the coordinates $(a, b)$ on a face of the cube and we center the dot at point $c = (\bar{a}, \bar{b}) \in \{1, \dots, n\}^2$, we obtain the following representation:
\[
	\varphi_{c}(a, b) =
	\begin{cases}
		e^{-{\frac{(a - \bar{a})^ 2 + (b - \bar{b})^2}{2}}} & \text{if}\,\, \max{(|a-\bar{a}|,|b -\bar{b}|)} \le 3 \\
		0  & \text{otherwise}
	\end{cases}.
\]
We set the standard deviation $\sigma = 1$, and hence $\varphi_{c}$ is close to the Gaussian function when the distance between $(a, b)$ and the center $c$ is strictly greater than $3$. Lastly, we can obtain every pattern that is displayed on a face by summing several of such functions with different centers $c_i = (\bar{a}_i, \bar{b}_i)$. Therefore, a face that shows $m$ dots located at the points $\{c_i\}_{i \in \{1,\dots,m\}}$ has the expression:}

\begin{equation}\label{face}
\varphi_{m}(a, b) = \sum_{i=1}^m k_i\, \varphi_{c_i}(a, b)
\end{equation}

{We introduce the coefficients $k_i \in [0,1]$ to model differences of intensity between the dots. These differences can be seen as noise caused by the scanning procedure.
Therefore, in the end, a data function $\varphi$ vanishes inside the cube and coincides with a function $\varphi_m$ for some $m$ and some $\{c_i\}_{i \in \{1,\dots,m\}}$ on each of the six faces of the cube.

In the following experimental part we fix $n = 25$, $n_1 = 6$, $n_2 = 13$ and $n_3 = 20$. We report some of the configurations that model the patterns of standard dice. The reader can easily guess the missing ones:
\begin{itemize}
	\item One dot: $m = 1$ and $c_1 = (n_2, n_2)$.
	\item Two dots: $m = 2$ and $c_1 = (n_1, n_1)$, $c_2 = (n_3, n_3)$.
	\item ...
	\item Five dots: $m = 5$ and $c_1 = (n_1, n_1)$, $c_2 = (n_1, n_3)$, $c_3 = (n_2, n_2)$, $c_4 = (n_3, n_1)$, $c_5 = (n_3, n_3)$.
	\item Six dots: $m = 6$ and $c_1 = (n_1, n_1)$, $c_2 = (n_1, n_2)$, $c_3 = (n_1, n_3)$, $c_4 = (n_3, n_1)$, $c_5 = (n_3, n_2)$, $c_6 = (n_3, n_3)$.
\end{itemize}

We will consider two classes of dice. The first one will contain exclusively dice in which the sum of opposite faces always equals seven: this is the class that contains the standard dice on the market. The second class is made up of dice where the sum of the number of dots on opposite faces is never equal to seven: these dice are fake since they do not exist on the market. The two classes are distinct with respect to the action of $G$, i.e., one die of the first class cannot be obtained from a die of the other through the action of a $g \in G$. Moreover, these two classes are distinct also with respect to the action of the group of all isometries of the cube (not only the ones that preserve the orientation).
\begin{figure}[htp]
	\begin{subfigure}{0.5\textwidth}
        \includegraphics[width=\textwidth]{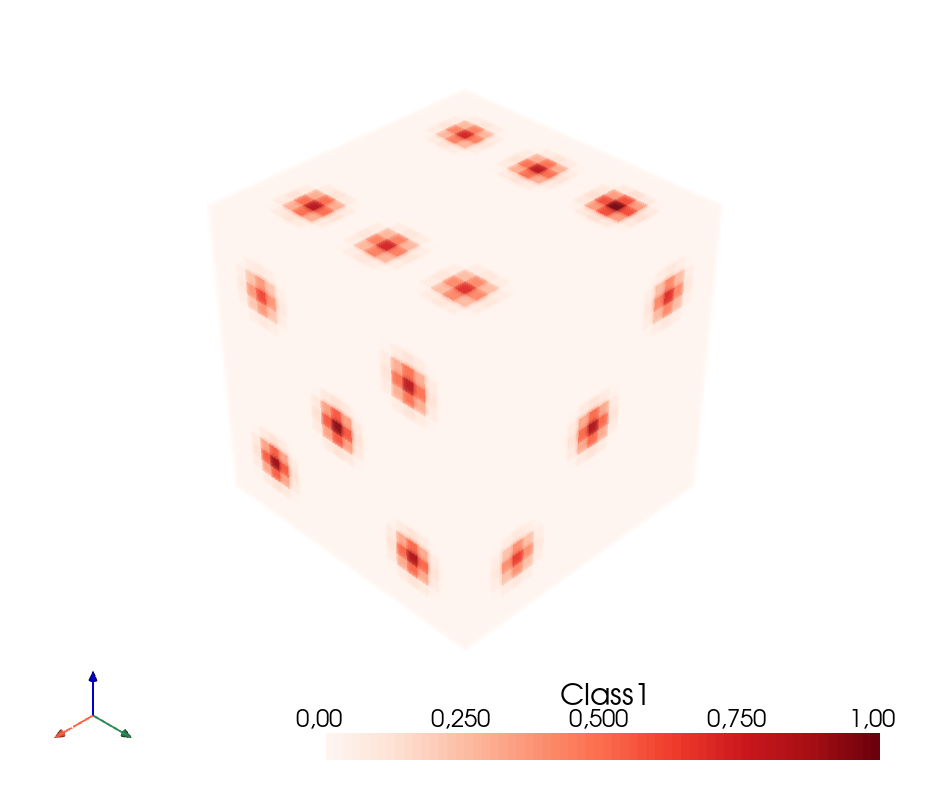}
        \caption{Firts class}
        \label{fig:class:a}
    \end{subfigure}
    \hfill
    \begin{subfigure}{0.5\textwidth}
        \includegraphics[width=\textwidth]{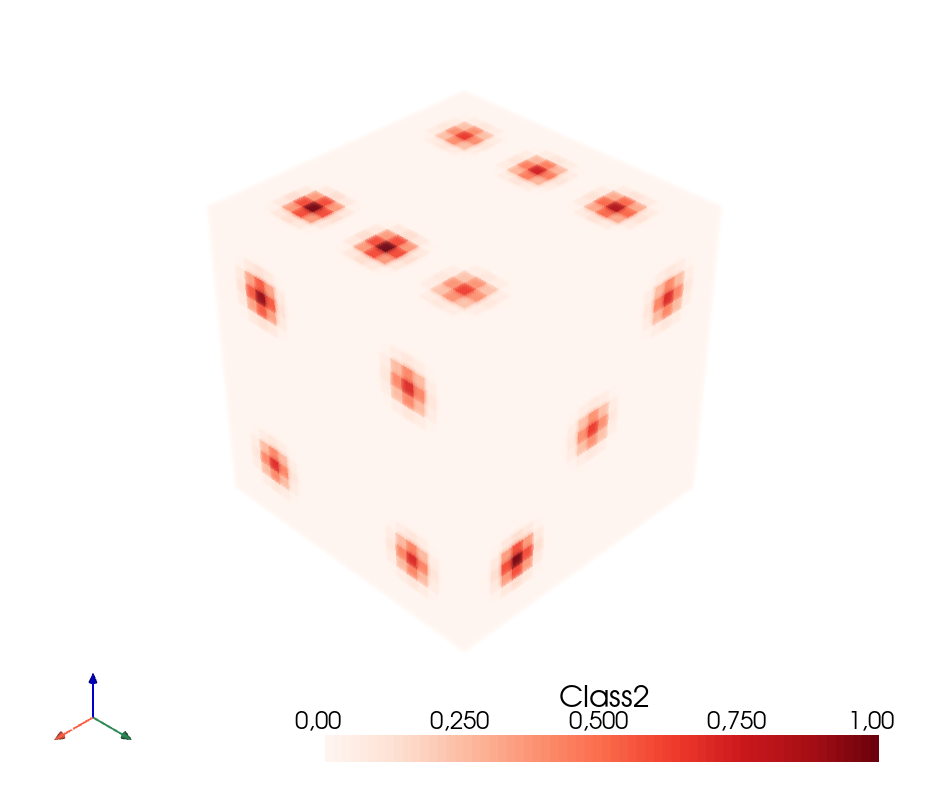}
        \caption{Second class}
        \label{fig:class:b}
    \end{subfigure}
	\caption{\textbf{Example plot of two generated functions} The left image shows a view of a die belonging to the first class, i.e., a die where the sum of the number of dots on opposite faces is always 7, as happens for the dice on the market. The right image shows a view of a die belonging to the second class, i.e., a die where the sum of the number of dots on opposite faces is never equal to 7. These dice are not regular. It is possible to see that not all the dots are equally vivid: this is due to the presence of the coefficients $k_i$. These values model the noise resulting from the scanning procedure.}
	\label{fig:exampledice}
\end{figure}

We generated a dataset of 10000 dice by means of the following procedure: dice computed at odd iterations belong to the first class, while the ones computed at even iterations belong to the second class. At each iteration, a random arrangement of the faces is obtained according to the selected class. Then, for each face, the appropriate function $\varphi_m$ is computed as the sum in~(\ref{face}) with coefficients $k_i$ drawn independently from the uniform distribution $U([0.6, 1])$.
Hence, we obtain a function $\varphi:X\to\R$.
Lastly,  a random number $p$ in the set $\{1,2,3,4,5\}$ is chosen,
and for each index $i\in\{1,\ldots,p\}$ a line $r_i$ that is a symmetry axis of the cube $C$ and is orthogonal to two of its faces is randomly chosen.
These choices are made with respect to uniform probability distributions.
Now, for each index $i$ a rotation of angle $\frac{\pi}{2}$ around $r_i$ is applied to the function $\varphi$, and hence we obtain a new function $\hat\varphi:X\to\R$ describing a die.
This ensures that all the possible spatial configurations of a die with a face placed on a tabletop can be generated.
We note that each function representing a die is characterized by $n^3-(n-2)^3=3458$ distinct values rather than the total $n^3 = 15625$ (this is due to the fact that the function vanishes inside the cube). Figure~\ref{fig:exampledice} shows two examples of functions generated by applying the previously described procedure.

In order to discriminate the data, we designed a pipeline that makes use of GENEOs, and then we compared it to the analogous pipeline that uses only the original data. The pipeline is composed as follows:

\begin{enumerate}{}	
	\item A GENEO $F$ is obtained as a convex combination of $F_1, F_2, F_3$ with weights $\alpha_1, \alpha_2, \alpha_3$, and then $F$ is applied to all the functions of the dataset. (Here we are using the property that the spaces of GENEOs are convex, provided that the spaces of data are convex~\cite{Bergomi2019}.)
	\item Each transformed function $\psi:X\to\R$ is treated as a vector $v_{\psi} \in \R^{n^3-(n-2)^3}$, storing the values of $\psi$ at the points of $X$ belonging to the outer surface of $C$. This allows to identify each die with a vector in $\R^{3458}$. To reduce the dimensionality of these data we apply the PCA (Principal Component Analysis),  preserving only the first two PCs. 
	\item With the reduced data, a SVM (Support Vector Machine) classifier with quadratic kernel is trained in order to discriminate the two classes.
\end{enumerate}

The pipeline that uses the original data skips the first step and feeds to the PCA directly the vectors associated with the original functions.

To compare the two methods, we randomly split the dataset into a training set of size 7000 and a test set of size 3000. Both the subsets have been sampled in order to maintain a balanced distribution of the two classes.
This splitting was used just to evaluate SVM classifiers in the third step, whereas the PCA was computed using the whole dataset in both cases.

\begin{figure}[htp]
	\centering
	\begin{subfigure}{0.45\textwidth}
        \includegraphics[width=\textwidth]{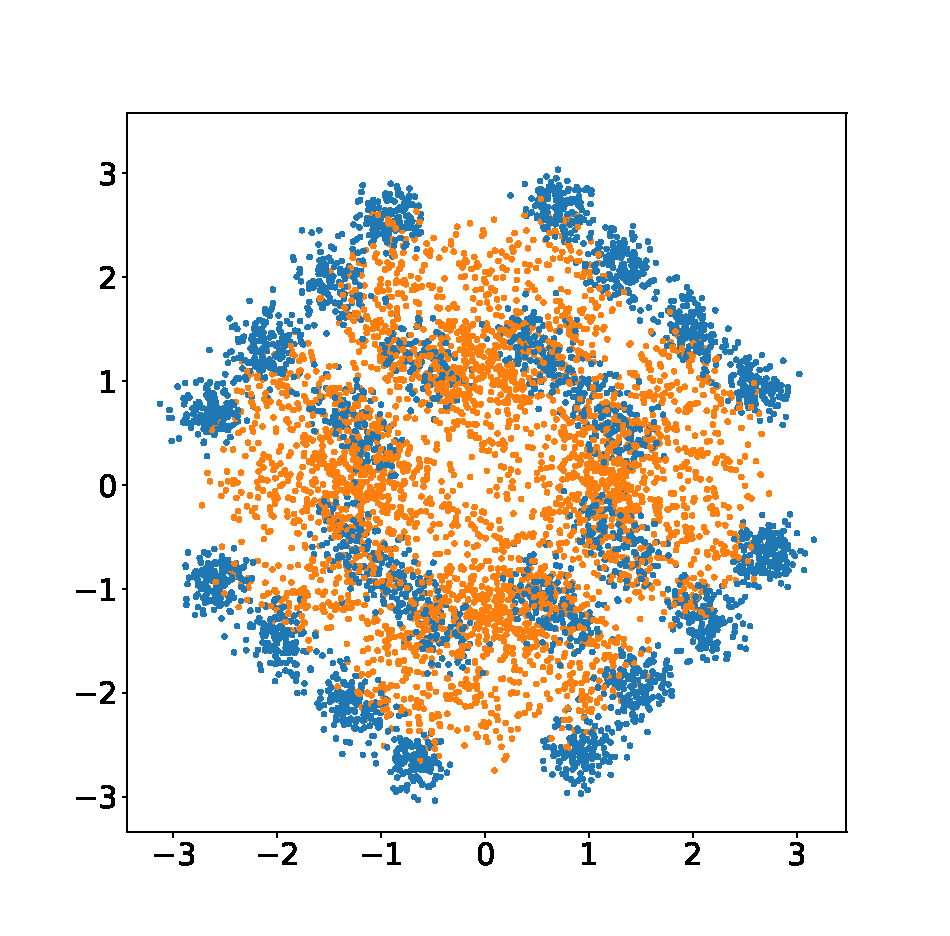}
        \caption{First method (not using GENEOs)}
        \label{fig:pca:a}
    \end{subfigure}
    \hfill
    \begin{subfigure}{0.45\textwidth}
        \includegraphics[width=\textwidth]{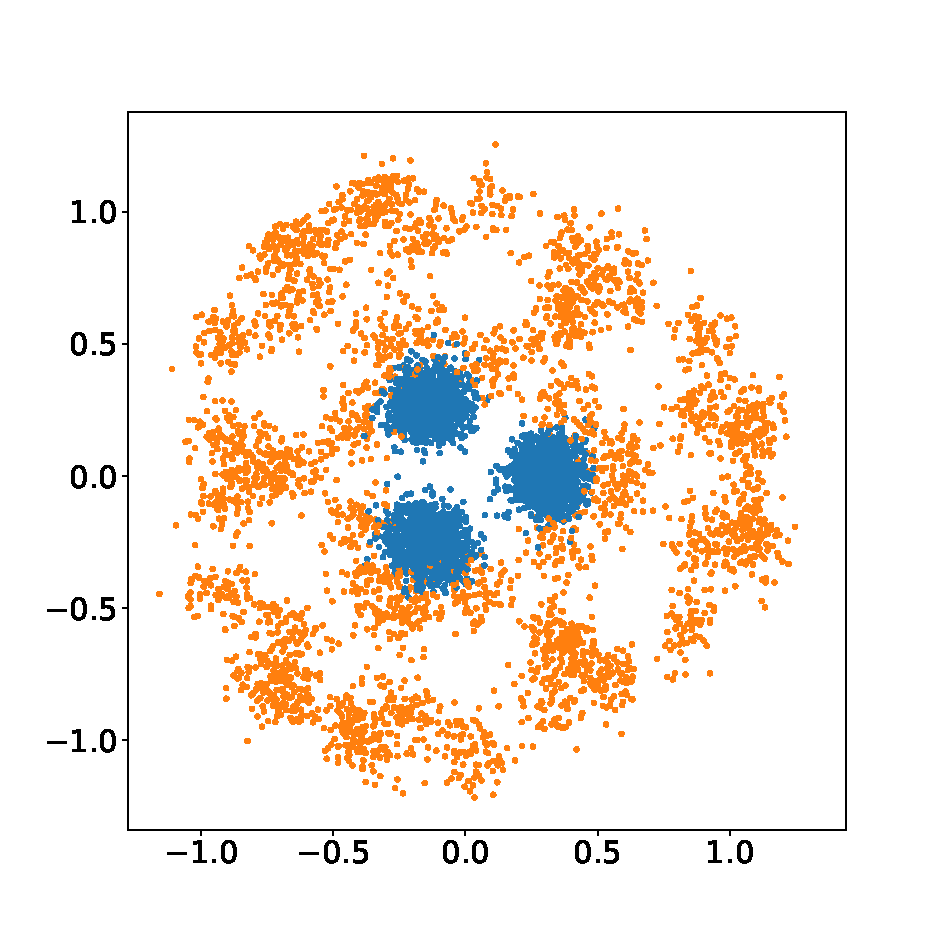}
        \caption{Second method (using GENEOs)}
        \label{fig:pca:b}
    \end{subfigure}
	\caption{\textbf{Plots of PCA results} This figure shows the results of PCA dimensionality reduction for both methods. Blue points are associated with functions belonging to the first class, while orange ones with functions belonging to the second class. A separation between the mapped points is clearly more evident in the right figure, which is the one relative to the method involving GENEOs. Hence GENEOs provide a simpler and more informative representation of the data.}
	\label{fig:pca}
\end{figure}

Figure~\ref{fig:pca} shows the results of dimensionality reduction for both kinds of data. It is clear that, for this specific choice, the outputs of the GENEO $F$ provide a clearer representation of the data: points are indeed almost separated, whereas, with the original data a separation is far less evident. However, it must be noted that not all the GENEOs obtained as convex combination of $F_1, F_2, F_3$ provide such a result. Here the parameters $\alpha_1, \alpha_2, \alpha_3$ have been repeatedly sampled from a Dirichlet distribution to select the ones giving the best results ($\alpha_1 = 0.318$, $\alpha_2 = 0.551 $, $\alpha_3 = 0.131$).

The plot also shows that in the right side picture it is reasonable to expect an almost perfect classification with a quadratic decision boundary. This fact justifies the third step of the pipeline. The decision boundaries learned by the SVM classifiers are shown in Figure~\ref{fig:svm}, while Table~\ref{tab:cms} reports confusion matrices and accuracy scores for the two methods.

\begin{figure}[htp]
	\centering
		\begin{subfigure}{0.45\textwidth}
        \includegraphics[width=\textwidth]{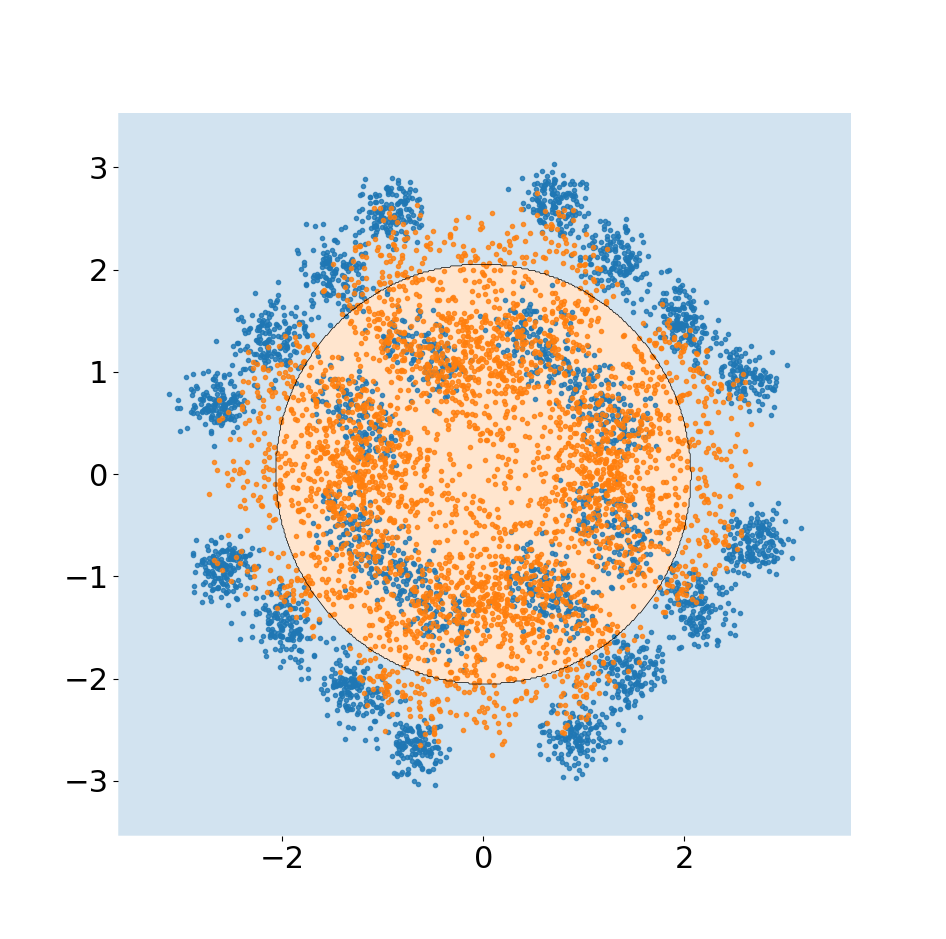}
        \caption{Firts method (not using GENEOs)}
        \label{fig:svm:a}
    \end{subfigure}
    \hfill
    \begin{subfigure}{0.45\textwidth}
        \includegraphics[width=\textwidth]{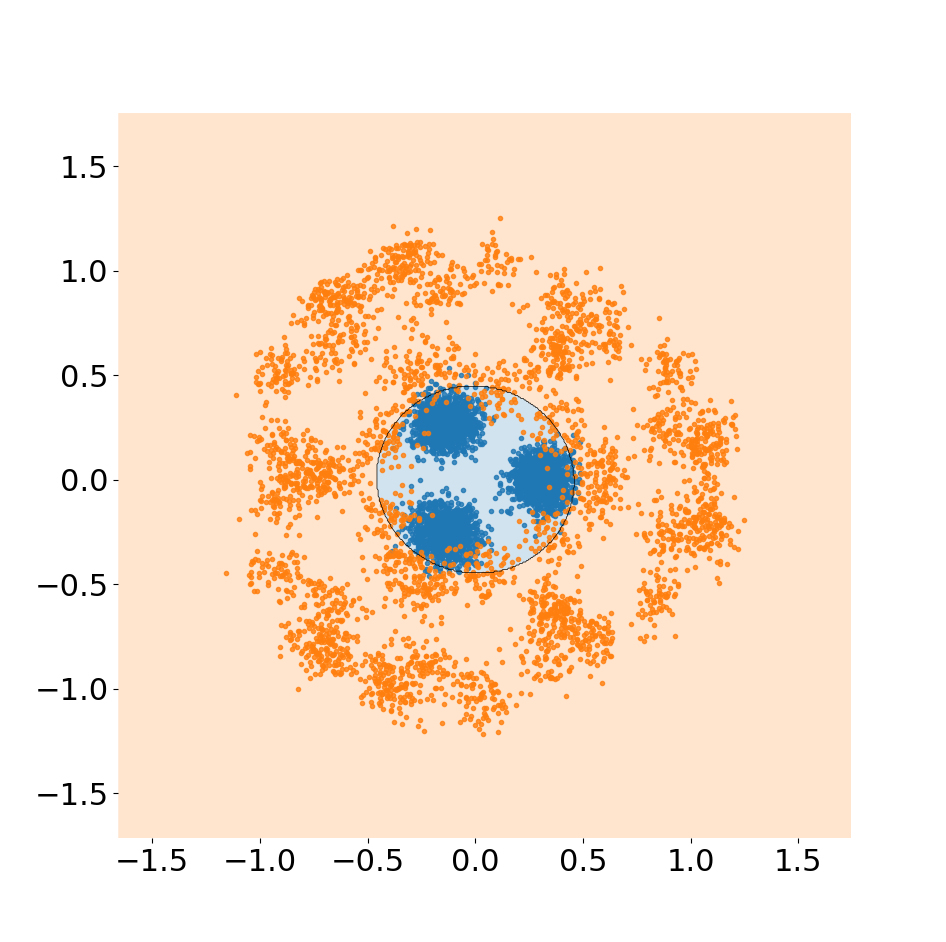}
        \caption{Second method (using GENEOs)}
        \label{fig:svm:b}
    \end{subfigure}
	\caption{\textbf{Plots of SVM decision boundaries} PCA results suggest that the simpler choice to classify the mapped points is to use a quadratic decision boundary. These figures show the boundaries learned by the SVM algorithm with a quadratic kernel for both methods. The right image shows that it is reasonable to expect better classification scores for the method involving GENEOs.}
	\label{fig:svm}
\end{figure}


\begin{table}[htp]

    \begin{subtable}[htp]{0.5\linewidth}
        \centering
        \begin{tabular}{c|c|c|c|c}
        \multicolumn{2}{c}{}&\multicolumn{2}{c}{Predicted}&\\
        \cline{3-4}
        \multicolumn{2}{c|}{}& Class 1 & Class 2 &\multicolumn{1}{c}{Total}\\
        \cline{2-4}
        \multirow{2}{*}{\rotatebox{90}{True}}& Class 1 & $2350$ & $1150$ & $3500$\\
        \cline{2-4}
        & Class 2 & $793$ & $2707$ & $3500$\\
        \cline{2-4}
        \multicolumn{1}{c}{} & \multicolumn{1}{c}{Total} & \multicolumn{1}{c}{$3143$} & \multicolumn{    1}{c}{$3857$} & \multicolumn{1}{c}{$7000$}\\
        \end{tabular}
        \caption{Train CM for original data.\\ Accuracy = 0.722}
    \end{subtable}
    \hfill
    \begin{subtable}[htp]{0.5\linewidth}
        \centering
        \begin{tabular}{c|c|c|c|c}
        \multicolumn{2}{c}{}&\multicolumn{2}{c}{Predicted}&\\
        \cline{3-4}
        \multicolumn{2}{c|}{}& Class 1 & Class 2 &\multicolumn{1}{c}{Total}\\
        \cline{2-4}
        \multirow{2}{*}{\rotatebox{90}{True}}& Class 1 & $1030$ & $470$ & $1500$\\
        \cline{2-4}
        & Class 2 & $345$ & $1155$ & $1500$\\
        \cline{2-4}
        \multicolumn{1}{c}{} & \multicolumn{1}{c}{Total} & \multicolumn{1}{c}{$1375$} & \multicolumn{    1}{c}{$1625$} & \multicolumn{1}{c}{$3000$}\\
        \end{tabular}
        \caption{Test CM for original data.\\ Accuracy = 0.728}
    \end{subtable}\\
    \begin{subtable}[htp]{0.5\linewidth}
        \centering
        \begin{tabular}{c|c|c|c|c}
        \multicolumn{2}{c}{}&\multicolumn{2}{c}{Predicted}&\\
        \cline{3-4}
        \multicolumn{2}{c|}{}& Class 1 & Class 2 &\multicolumn{1}{c}{Total}\\
        \cline{2-4}
        \multirow{2}{*}{\rotatebox{90}{True}}& Class 1 & $3432$ & $68$ & $3500$\\
        \cline{2-4}
        & Class 2 & $232$ & $3268$ & $3500$\\
        \cline{2-4}
        \multicolumn{1}{c}{} & \multicolumn{1}{c}{Total} & \multicolumn{1}{c}{$3664$} & \multicolumn{    1}{c}{$3336$} & \multicolumn{1}{c}{$7000$}\\
        \end{tabular}
        \caption{Train CM for GENEO data. \\Accuracy = 0.957}
    \end{subtable}
    \begin{subtable}[htp]{0.5\linewidth}
        \centering
        \begin{tabular}{c|c|c|c|c}
        \multicolumn{2}{c}{}&\multicolumn{2}{c}{Predicted}&\\
        \cline{3-4}
        \multicolumn{2}{c|}{}& Class 1 & Class 2 &\multicolumn{1}{c}{Total}\\
        \cline{2-4}
        \multirow{2}{*}{\rotatebox{90}{True}}& Class 1 & $1464$ & $36$ & $1500$\\
        \cline{2-4}
        & Class 2 & $100$ & $1400$ & $1500$\\
        \cline{2-4}
        \multicolumn{1}{c}{} & \multicolumn{1}{c}{Total} & \multicolumn{1}{c}{$1564$} & \multicolumn{    1}{c}{$1436$} & \multicolumn{1}{c}{$3000$}\\
        \end{tabular}
        \caption{Test CM for GENEO data.\\ Accuracy = 0.955}
    \end{subtable}
    \caption{\textbf{Confusion Matrices} These tables report confusion matrices for both methods and for both train and test set. The accuracy score is considerably higher for the method which uses GENEOs both for training and test set. Since train and test accuracy scores are very near for both methods, we are confident that the methods perform equally well on unseen data, meaning that they are not overfitting.}
    \label{tab:cms}
\end{table}

As we expected, the method employing GENEOs has an accuracy score of 0.955 on the test set, a value significantly higher than the one of the method without GENEOs, which is 0.728.

This result holds even if we modify some of the hyperameters. For example, we can change the distribution of the coefficients $k_i$. In particular, we can consider again a uniform distribution but with a smaller range containing 1 (i.e., [0.8, 1.0]). In this way we reduce the randomness of the data. Because of this, the plots of Figures~\ref{fig:pca} and~\ref{fig:svm} tend to show groups of points that are much more concentrated, and therefore both the methods perform better, even though there is still a gap between the two. Furthermore, if we change the number of PCs retained by the PCA, we observe similar results by keeping one to three PCs. Starting from four PCs onward the two methods perform almost equally well. Another possibility is to use a different kernel for the SVM classifiers. For example, one can use a Radial Basis Function (RBF) kernel to obtain more complex decision boundaries.
Models capable of learning more complex decision boundaries reduce the gap of performances between the two approaches, nonetheless considering up to two PCs the use of GENEOs is still highly beneficial. A summary of these experiments is reported in Table~\ref{tab:results}.

\begin{table}[htp]
\begin{center}
    \begin{tabular}{ |c|c|c|c|c| }
    \hline
    $k_i$ distribution & \makecell{Number \\ of PCs} & SVM kernel & \makecell{Accuracy \\ without GENEOs} & \makecell{Accuracy \\ with GENEOs}\Tstrut\Bstrut\\
    \hline
    $U([0.8, 1.0])$ & 1 & quadratic & 0.580 & \textbf{0.839}\Tstrut\\
    $U([0.8, 1.0])$ & 2 & quadratic & 0.726 & \textbf{0.999}\\
    $U([0.8, 1.0])$ & 3 & quadratic & 0.974 & \textbf{0.999}\\
    $U([0.8, 1.0])$ & 4 & quadratic & 0.981 & \textbf{0.999}\Bstrut\\
    \hline
    $U([0.8, 1.0])$ & 1 & RBF       & 0.685 & \textbf{0.915}\Tstrut\\
    $U([0.8, 1.0])$ & 2 & RBF       & 0.916 & \textbf{1.000}\\
    $U([0.8, 1.0])$ & 3 & RBF       & \textbf{1.000} & \textbf{1.000}\Bstrut\\
    \hline
    $U([0.6, 1.0])$ & 1 & quadratic & 0.589 & \textbf{0.819}\Tstrut\\
    $U([0.6, 1.0])$ & 2 & quadratic & 0.728 & \textbf{0.955}\\
    $U([0.6, 1.0])$ & 3 & quadratic & 0.915 & \textbf{0.956}\\
    $U([0.6, 1.0])$ & 4 & quadratic & 0.930 & \textbf{0.955}\Bstrut\\
    \hline
    $U([0.6, 1.0])$ & 1 & RBF       & 0.615 & \textbf{0.828}\Tstrut\\
    $U([0.6, 1.0])$ & 2 & RBF       & 0.816 & \textbf{0.974}\\
    $U([0.6, 1.0])$ & 3 & RBF       & 0.970 & \textbf{0.976}\Bstrut\\
    \hline
    $U([0.4, 1.0])$ & 1 & quadratic & 0.591 & \textbf{0.778}\Tstrut\\
    $U([0.4, 1.0])$ & 2 & quadratic & 0.718 & \textbf{0.902}\\
    $U([0.4, 1.0])$ & 3 & quadratic & 0.860 & \textbf{0.901}\\
    $U([0.4, 1.0])$ & 4 & quadratic & 0.881 & \textbf{0.903}\Bstrut\\
    \hline
    $U([0.4, 1.0])$ & 1 & RBF       & 0.600 & \textbf{0.780}\Tstrut\\
    $U([0.4, 1.0])$ & 2 & RBF       & 0.742 & \textbf{0.910}\\
    $U([0.4, 1.0])$ & 3 & RBF       & 0.893 & \textbf{0.909}\\
    $U([0.4, 1.0])$ & 4 & RBF       & \textbf{0.932} & 0.911\Bstrut\\
    \hline
    \end{tabular}
    \caption{\textbf{Accuracy scores for different hyperparameters} This table shows a comparison between the accuracies of the two methods for different combinations of the hyperparameters. Using GENEOs is always beneficial when combined with a quadratic kernel SMV and a PCA with number of PCs less than four.
Otherwise, with a RBF kernel SVM, GENEOs are beneficial up to two or three PCs, whereas retaining more PCs the two methods tend to perform equally well.}
    \label{tab:results}
\end{center}
\end{table}

\section{Discussion}\label{discussion}
In our paper we have proved that all linear GEOs and GENEOs can be produced by means of a dual method based on the concept of permutant measure with respect to a group $G$, under the assumption that $G$  transitively acts on a finite set. This method could be particularly useful when we have to deal with a large group $G$,
as frequently happens in real applications. Summations on large groups can indeed present computational difficulties,
while summations on the supports of permutant measures are often easier.
The use of the set of all permutant measures also benefits of its lattice structure.
The availability of the approach we have studied in this paper could be relevant for
the application of GEOs and GENEOs as multi-level components in deep learning and make the construction of neural networks more
transparent and interpretable, according to the mathematical framework proposed in~\cite{Bergomi2019}. The next natural step in this line of research is the extension of our approach to topological groups. We plan to study this possible extension in a forthcoming paper.

{In Section~\ref{example} we have also illustrated an example, showing how GENEOs built by permutant measures could be used in order to extract relevant properties from data.
In our opinion, GENEOs of this kind could be of great use in machine learning, since they can inject information about the way data should be managed on the basis of prior knowledge. In the given example, this knowledge was represented by the clear relevance of symmetries in data concerning a cube, leading us to focus our attention on
symmetry planes.
We are aware of the toy nature of this example, nonetheless we believe that it is important for two main reasons: first, it shows that a small network of GENEOs can be more informative than its single components, secondly, it makes clear that GENEOs allow to obtain simple and explainable representations of data that are easier to process in a pipeline with other explainable methods.
We know that this specific example could be managed in many other ways, without the drastic dimensionality reduction of PCA and with methods more complex than SVM classifiers. Despite this, we showed that GENEOs can improve the results by requiring only minimal information. In the near future, we plan to develop more extended applications of GENEOs built via permutant measures, employing real world data.}

\begin{acknowledgements}
This research has been partially supported by INdAM-GNSAGA.
The authors thank Alessandro Achille, Fabio Anselmi and Giovanni Paolini for their helpful advice.
This paper is dedicated to the memory of Dr. Mohamed Mashally.
\end{acknowledgements}

%
 \section*{Conflict of interest}

 The authors declare that they have no conflict of interest.

 \section*{Authors' contributions} P.F. devised the project. All authors contributed to the manuscript. All authors read and approved the final manuscript.
 The authors of this paper have been listed in alphabetical order.



\bibliographystyle{spmpsci}
\bibliography{bibGP}

\end{document}